\documentclass[12pt]{article}

\usepackage{layout}
\usepackage[normalem]{ulem}
\usepackage[dvipsnames]{xcolor}
\usepackage[OT1]{fontenc}
\usepackage{color}
\usepackage{amsthm,amsmath,graphicx,latexsym,amssymb,amscd,amsfonts,enumerate, bm}
\usepackage[colorlinks,citecolor=blue,linkcolor=red,urlcolor=blue]{hyperref}
\usepackage[labelfont=bf,labelsep=colon]{caption}
\usepackage[export]{adjustbox}

\theoremstyle{plain}
\newtheorem{theorem}{Theorem}
\newtheorem{proposition}[theorem]{Proposition}
\newtheorem{lemma}[theorem]{Lemma}
\newtheorem{corollary}[theorem]{Corollary}
\newtheorem{prop}[theorem]{Proposition}
\newtheorem{definition}[theorem]{Definition}
\newtheorem{ass}[theorem]{Assumption}
\newtheorem{remark}[theorem]{Remark}
\newtheorem{example}[theorem]{Example}

\def\R{\mathbb{R}}

\def\ud{\mathrm{d}}

\def\tb{\textbf}

\def\supp{\mathrm{supp}\,}
\def\ind{{\mathchoice{1\mskip-4mu\mathrm l}{1\mskip-4mu\mathrm l}
{1\mskip-4.5mu\mathrm l}{1\mskip-5mu\mathrm l}}}

\begin{document}

\title{Optimal Selling of Defaultable Assets using the Distribution Builder}
\author{
Sixian Jin
\thanks{Department of Mathematics, California State University San Marcos, {\tt sjin@csusm.edu.}}
~~~~Stephan Sturm
\thanks{Department of Mathematical Sciences, Worcester Polytechnic Institute, {\tt ssturm@wpi.edu.}}
}

\date{}

\maketitle

\begin{abstract}
We consider the problem of when it is best to sell a risky asset in the framework of the \textit{distribution builder} approach under the consideration of potential ruin. This approach allows investors to express their preferences directly as a desired target distribution without first specifying a risk aversion or utility function. Mathematically, the problem is closely related to the Skorokhod embedding problem, where the goal is to attain a given distribution by stopping a diffusion process. We work in a general framework of one-dimensional diffusion processes and extend existing results to include the possibility of ruin. We first provide a full characterization of the set of distributions that can be attained before ruin occurs. Then, we formulate two optimization problems that tackle the issue of what to do if the originally specified distribution is either not attainable or not optimal: finding the attainable distribution closest to an unattainable distribution and selecting an optimal attainable distribution under first-order stochastic dominance constraints if the originally specified distribution is attainable, but not optimal. We show existence and uniqueness of solutions to these constrained convex optimization problems in an extended $f$-divergence framework, provide an analytic characterization of solutions and give numerical examples.
\end{abstract}

\vspace{5mm}
 
\begin{flushleft}
	 \textbf{Keywords:} Distribution Builder, Optimal Selling, Ruin Time, Skorokhod Embedding, Constrained Convex Optimization, Extended $f$-Divergence\\
	 \textbf{Mathematics Subject Classification (2020):} 91G10, 60J60, 90C25.
	 %\textbf{JEL classification:}  G11, C61.
\end{flushleft}

\section{Introduction}\label{sec:intro}

We consider the problem of selling a defaultable asset prone to ruin in the \textit{distribution builder} framework. Determining the optimal timing for the sale of an indivisible asset is a mainstay problem of mathematical finance. For example, Shiryaev, Xu and Zhou \cite{SXZ08} and du Toit and Peskir \cite{dTP09} consider the problem for asset prices modeled by a geometric Brownian motion to find, on a finite time horizon, the stopping time that gives a payoff closest to the ultimate maximum -- independently of the seller's risk preferences. A preference-dependent approach has been developed by Leung and Wang \cite{LW19} for classical expected utility and geometric Brownian motion and exponential Ornstein--Uhlenbeck processes as well as by Pedersen and Peskir \cite{PP16} for a mean-variance criterion for geometric Brownian motion. General diffusion models have been used by Henderson \cite{Hen12} in a prospect-theory framework and by Carr and Sturm \cite{CS26} for a distribution builder approach.

The distribution builder framework was first established by Sharpe, Goldstein, and Blythe \cite{SGB00} in 2000 to provide an alternative to the expected utility theory (EUT) framework. They address the issue that the EUT is widely used in academia thanks to its crisp axiomatic approach and tractability, but industry has been sluggish to adopt it. At the core of this is that utility functions (or even risk aversions) are hard to establish consistently from empirical data. They propose, in an optimal investment setting, to bypass the step of the specification of the investor's utility function and suggest eliciting preferences directly by letting them determine their desired target distribution. The framework has been further discussed in \cite{S07, GJS08}, a similar idea, based on Markowitz's mean-variance setting and mental accounting has been proposed in \cite{DMSS10}. While this direction of research has certainly not been the mainstream approach, it has gained also interest from institutional portfolio management in the framework of goal-based allocation, see Sepp \cite{Sep26}. The distribution builder framework has been used for the optimal selling problem by Carr and Sturm \cite{CS26}, substituting the stopping time of the sale for the terminal time of the portfolio optimization as the time when the target distribution is reached.

From a mathematical perspective, this problem is closely related to the classical Skorokhod embedding problem. Given a stochastic process $R=(R_t)_{t\geq 0}$ and a probability measure $\mu$, the Skorokhod embedding problem asks whether there exists a stopping time $\tau$ such that
\[
 R_\tau \sim \mu.
\]
If such a stopping time exists and satisfies some finiteness or integrability conditions, then $\mu$ is said to be \textit{attainable} for the process $R$. The problem was originally formulated for Brownian motion by Skorokhod \cite{Sko65} and has since been studied for many classes of processes, including Brownian motion with drift \cite{GF00}, diffusions \cite{PP01, CH04}, and L\'evy processes \cite{BL92, OP09, DGPR19}; an excellent survey on the topic is \cite{Obl04}. These works provide a variety of embedding criteria and constructions, depending on the structure of the underlying process.

In this paper, we solve the problem of the optimal timing of a sale of a \textit{defaultable asset}, i.e., an asset that might lose all its value. The focus of our stylized model is to complement the diffusion risk model by a default time where default happens at the moment the asset price reaches a lower barrier, which we assume for tractability to be zero. The diffusion risk model is given by a risky price process defined by the SDE
\begin{equation}\label{eq:diff}
	\ud R_t = a(R_t) \, \ud t + \sigma(R_t) \, \ud B_t, \qquad R_0 = r_0 > 0,
\end{equation}
where $B$ is a standard Brownian motion. Ruin occurs at the stopping time 
\[
\tau_0 := \inf\{t\geq 0: R_t\leq 0\}
\]
at which the process is absorbed at $0$. Therefore, the stopping strategies considered in this paper must occur before (or at) ruin. Thus,
in addition to asking whether there exists a stopping time $\tau$ such that $R_\tau\sim \mu$, we have to ask further whether such a stopping time can be chosen so that $\tau \leq \tau_0$ almost surely. Thus, at the mathematical core, our results are a novel extension of the Skorokhod embedding problem to stopped diffusions, where the stopping time is of barrier-hitting type.

A complementary interpretation of this problem is within the framework of (actuarial) ruin theory. Initiated by Lundberg and further developed by Cram\'er \cite{Cra30}, ruin theory models the surplus of an insurance company or financial entity over time. In the classical setting, the company begins with an initial surplus, receives premium income continuously, and pays random claims at random
times. Ruin occurs when the surplus becomes non-positive, meaning that accumulated losses have exhausted the initial reserve and incoming premiums. Many risk models have been developed in the ruin-theory literature, including the Cram\'er--Lundberg model, driven by a compound Poisson process \cite{AA10, AS20}; diffusion risk models, in which the surplus follows a diffusion process \cite{Bro95, TM03, YY12}; spectrally negative L\'evy insurance risk models \cite{KKM04}; and jump-diffusion risk models \cite{YZ03}. In this framework our problem can be phrased as follows: Assuming that a financial company wants to sell a portfolio of insurance contracts with surplus process modeled by a diffusion, what is the best time to sell the portfolio and which revenue distributions can be achieved by selling the portfolio?

This leads to the following fundamental conceptual question:
\begin{itemize}
	    \item[\textbf{A.}] Assuming that the price process follows the stochastic dynamics \eqref{eq:diff}, which distributions are attainable by a selling strategy given by (optimal) stopping before ruin?
\end{itemize}
This ruin-constrained embedding problem differs substantially from the classical Skorokhod embedding problem. In the classical setting, the admissible stopping times are not restricted by a random absorbing horizon. In contrast, here the ruin constraint imposes both a state constraint and a random time constraint. As a result, the attainable distributions depend not only on the dynamics of $R$, but also on the ruin boundary. Establishing criteria for attainability under ruin therefore requires new arguments, even for diffusion risk models.

After determining whether a target distribution is attainable, two natural optimization questions arise.
\begin{itemize}
	\item[\textbf{B.}] If a target distribution is not attainable, can one find the closest attainable distribution?
	\item[\textbf{C.}] If a target distribution is attainable, can one find another attainable distribution that is preferable to it? If so, which distribution is optimal?
\end{itemize}

To make these questions precise, we need two additional ingredients: an order relation that compares distributions, and a discrepancy functional that measures the distance between distributions. For the order relation, we use \textit{first-order stochastic dominance}. Let $\mu$ and $\nu$ be probability measures; we say that $\nu$ dominates $\mu$ in the sense of
first-order stochastic dominance, and write $\nu \succeq \mu$, if
\[
    F_\nu(x) \leq F_\mu(x), \qquad \text{ for all }x\in\mathbb{R}
\]
where $F_\mu (x) = \mu\bigl((-\infty,x]\bigr)$ is the cumulative distribution function of $\mu$ (and likewise $F_\nu$). We write $\nu \succ \mu$ if $\nu\succeq \mu$ and the inequality is strict for at least one value of $x$. If a probability measure $\mu$ is dominated by an attainable probability measure $\nu$, then we say that $\mu$ is \textit{super-attainable}. This terminology reflects the idea that, although the original target distribution may not itself be attainable, it can be dominated by a distribution that is attainable before ruin. This is analogous to the idea of superhedging, only at the level of distributions instead of random variables.

On the other hand, we need the notion of a distance between two probability measures to be able to find an optimal measure by minimizing the distance. There we introduce a discrepancy functional between probability distributions and minimize it over the admissible class. Discrepancy measures between probability distributions have been extensively studied in statistics, information theory, and optimal transport. The functionals used to quantify such discrepancies do not need to be metrics, nor do they need to be symmetric. Once a discrepancy functional is fixed, \textbf{Problems B} and \textbf{C} can be formulated as constrained optimization problems: one minimizes the discrepancy between the original target distribution and an attainable distribution, subject to the ruin and stochastic dominance constraints.

We will focus on a specific class of discrepancy measures, namely extended $f$-divergences. This class includes many popular statistical distances such as the Kullback--Leibler divergence, R\'enyi-type divergences, Hellinger-type distances and the total variation distance \cite{KKK13, KL51, Ren61,Csi67}. In general, extended $f$-divergences come in two flavors: classical $f$-divergences, which only allow candidate distributions that are absolutely continuous with respect to the original target distribution, as well as proper extended $f$-divergences, where candidate distributions may also have singular mass. Classical $f$-divergences do not place mass on events that the original target distribution assigns zero probability to and regard such outcomes as impossible. From the distribution builder perspective, the optimized law retains the set of possible outcomes from the client's original preferences rather than allowing also other outcomes. However, if the selected target distribution is too aggressive or unrealistic in general, the investor might be better served with an investment strategy explicitly allowing bankruptcy or default, even if the investor originally did not consider that. Mathematically, this can be represented by a singular component, for instance a point mass at the default state $0$; such cases are allowed within the extended $f$-divergence framework. The question of minimizing $f$-divergences under linear constraints traces back to Csisz\'{a}r's seminal work \cite{Csi67} and seminal work in this direction is, e.g., \cite{BL91} and \cite{Leo08}. Given the idiosyncratic setting of our problem with potentially unbounded scale function, we do not rely on classical duality theory but prove existence of the optimizer directly in general and provide an analytic subgradient formulation that, while not covering all hypothetical cases, is sufficient for almost all practically important problems. 

The rest of the paper is organized as follows. In Section~\ref{sec:embed}, we introduce the diffusion model with ruin and establish criteria under which distributions are attainable before ruin, thereby solving \tb{Problem}~\textbf{A}. We also discuss the construction and basic properties of the ruin time. In Section~\ref{sec:opt}, we formulate and analyze the constrained optimization problems associated with \tb{Problems}~\textbf{B} and~\textbf{C} within the extended $f$-divergence framework. Section~\ref{sec:ex} applies these general results to specific discrepancy functionals, and presents analytical and numerical examples as well as counterexamples; Section \ref{sec:conc} concludes. Technical results are relegated to the appendix.

\section{Embedding of a measure in a ruined diffusion}
\label{sec:embed}
Let $\bigl(\Omega,\mathcal{F},(\mathcal{F}_t)_{t\geq 0},\mathbb{P}\bigr)$
be a complete filtered probability space satisfying the usual conditions, and let
$B=(B_t)_{t\geq 0}$ be a standard $(\mathcal{F}_t)$-Brownian motion. Let
$I=(\ell,\rho)$ be an open interval such that $-\infty \leq \ell <0<r_0<\rho\leq \infty$. 
We consider a one-dimensional It\^o diffusion
$R=(R_t)_{t\geq 0}$ on $I$ satisfying
\begin{equation}\label{Diff}
    \ud R_t = a(R_t)\,\ud t+\sigma(R_t)\,\ud B_t,
    \qquad R_0=r_0,
\end{equation}
where $a,\sigma:I\to\mathbb{R}$ are Borel measurable functions satisfying the
Engelbert--Schmidt conditions, i.e., $\sigma^2(x)>0$ for all $x\in I$, and $1/\sigma^2$ and $|a|/\sigma^2$ are locally integrable at
every point in $I$.
These assumptions ensure that the diffusion is well defined in the weak sense,
at least up to the first exit time from $I$. Throughout this section, we assume
in addition that $R$ is non-explosive on $I$.

The ruin time is defined by
\[
    \tau_0:=\inf\{t\geq 0:R_t\leq 0\},
\]
and the corresponding ruined process is defined by
\[
    \bar R_t:=R_{t\wedge \tau_0},
    \qquad t\geq 0.
\]
Thus, after ruin occurs, the process is absorbed at the ruin level $0$.

In this section, we study the Skorokhod embedding problem for the ruined
diffusion $\bar R$. More precisely, given a probability measure $\mu$ on
$[0,\rho)$, we seek an almost surely finite stopping time $\tau$ with respect to
$(\mathcal{F}_t)_{t\geq 0}$ such that $\bar R_\tau \sim \mu$.
If such a stopping time exists, we call $\mu$ \textit{attainable} for the ruined
diffusion $\bar R$.

The corresponding embedding problem for the unstopped diffusion $R$, without the
ruin constraint, has been studied in \cite{PP01,CH04}, following the
Az\'ema--Yor approach \cite{AY79}. To recall these results, define the scale
function $S:I\to\mathbb{R}$ by
\begin{equation}\label{eq:scale}
    S(x):=\int_{r_0}^{x} \exp\left\{-2\int_{r_0}^{u}\frac{a(r)}{\sigma^2(r)}\,\ud r \right\} \,\ud u .
\end{equation}
Then $S(r_0)=0$, and $S$ is strictly increasing on $I$. For a probability measure
$\mu$ on $I$ such that $S$ is $\mu$-integrable, define its scaled mean by
\[
    m^\mu := \int_I S(u)\,\mu(\ud u).
\]
The following lemma is Lemma~9 in \cite{CH04}, which extends Theorem~2.1 in
\cite{PP01} and gives a Skorokhod embedding criterion for one-dimensional
diffusion processes.

\begin{lemma}\label{lem:thm2}
Let $\mu$ be a probability measure on $I$ such that $\int_I |S(u)|\,\mu(\ud u)<\infty$.
Then $\mu$ is attainable for the diffusion $R$ if and only if one of the following
conditions holds:
\begin{itemize}
    \item[\textup{(i)}] $S(\ell)=-\infty$ and $S(\rho)=\infty$;
    \item[\textup{(ii)}] $S(\ell)=-\infty$, $S(\rho)<\infty$, and
    $m^\mu\geq 0$;
    \item[\textup{(iii)}] $S(\ell)>-\infty$, $S(\rho)=\infty$, and
    $m^\mu\leq 0$;
    \item[\textup{(iv)}] $S(\ell)>-\infty$, $S(\rho)<\infty$, and
    $m^\mu=0$.
\end{itemize}
Moreover, when these conditions hold, an Az\'ema--Yor type stopping time
$T_\mu^{AY}$ can be constructed such that $R_{T_\mu^{AY}}\sim \mu$.

When $m^{\mu}\ge 0$, define
\begin{equation}\label{Tmu1}
    T_{\mu}^{AY}=\inf \bigl\{t>0 \, : \, R_t\leq h_+(M_t)\bigr\},
\end{equation} 
where $M_t:=\sup_{0\leq s\leq t}R_s$ and $x\rightarrow h_+(x)$ is an increasing function defined as: $h_+(x)=-\infty$ for $x\leq S^{-1}(m^{\mu})$; $h_+(x)=x$ for $x\ge S^{-1}(\beta_+)$ with $\beta_+=\sup\bigl\{y\in\R:\mu\left([S^{-1}(y),\rho)\right)>0\bigr\}$,
and 
\begin{eqnarray*}
    h^{-1}_+(z)&:=&S^{-1}\biggl(\frac{1}{\mu\left([z,\rho)\right)}\int_{[z,\rho)}S(u)\mu(\ud u)\biggr)
\end{eqnarray*}
for $S^{-1}(m^{\mu})<x<S^{-1}(\beta_+)$.

When $m^{\mu}\leq 0$, define
\begin{equation}\label{Tmu2}
    T_{\mu}^{AY}=\inf\bigl\{t>0: R_t\ge h_-(N_t)\bigr\},
\end{equation} 
where $N_t:=\inf_{0\leq s\leq t}R_s$ and $x\rightarrow h_-(x)$ is an increasing function defined as:
$h_-(x)=\infty$ for $x\ge S^{-1}(m^{\mu})$; $h_-(x)=x$ for $x\leq S^{-1}(\beta_-)$ with $\beta_-=\inf\bigl\{y\in\R:\mu\left((\ell,S^{-1}(y)]\right)>0\bigr\}$,
and
\begin{eqnarray*}
    h^{-1}_-(z)&:=&S^{-1}\biggl(\frac{1}{\mu\left((\ell,z]\right)}\int_{(\ell,z]}S(u)\mu(\ud u)\biggr)
\end{eqnarray*}
for $S^{-1}(\beta_-)<x<S^{-1}(m^{\mu})$.
\end{lemma}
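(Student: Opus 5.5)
The plan is to reduce everything to Brownian motion by passing to natural scale. Set $Y_t := S(R_t)$. By It\^o's formula (valid for the generalized second derivative under the Engelbert--Schmidt conditions), $Y$ is a continuous local martingale on the interval $J := (S(\ell), S(\rho))$ with $Y_0 = 0$. Its quadratic variation is $\ud\langle Y\rangle_t = S'(R_t)^2\sigma^2(R_t)\,\ud t > 0$. By Dambis--Dubins--Schwarz, $Y_t = W_{\langle Y\rangle_t}$ for a Brownian motion $W$, possibly on an enlarged space. Since $R$ is non-explosive, $\langle Y\rangle$ runs until $W$ exits $J$. If the relevant endpoint is infinite, $W$ simply never exits on that side.

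Stopping times for $R$ correspond to stopping times for $W$ before its exit from $J$, through the time change. Moreover, $R_\tau \sim \mu$ if and only if $W_{\langle Y\rangle_\tau} \sim \nu := \mu\circ S^{-1}$. So the question becomes: for which $\nu$ on $J$ with $\int|y|\,\nu(\ud y)<\infty$ is there an a.s. finite stopping time $\sigma$ with $W_\sigma\sim\nu$ and $\sigma < H_J$, the exit time of $J$? Note that $m^\mu = \int y\,\nu(\ud y)$.

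Necessity follows from boundedness on one side. In case (ii), $Y_{t\wedge\sigma} \le S(\rho) < \infty$, so the stopped $Y$ is a local martingale bounded above. By Fatou's lemma applied to $S(\rho) - Y_{t\wedge\sigma}$, it is a submartingale. Letting $t\to\infty$ with $\sigma<\infty$ gives $m^\mu = E[Y_\sigma] \ge 0$. Case (iii) is symmetric and gives $m^\mu \le 0$. In case (iv) both bounds hold, so $Y_{\cdot\wedge\sigma}$ is a bounded martingale and $m^\mu = 0$. In case (i) there is no constraint.

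Sufficiency is the construction. When $m^\mu \ge 0$, I use the Az\'ema--Yor stopping time for $W$ with target $\nu$:
\[
\sigma = \inf\{t : W_t \le \psi_\nu(\sup_{s\le t} W_s)\},
\]
where $\psi_\nu$ is the barycenter function $\psi_\nu(x) = \nu([x,\infty))^{-1}\int_{[x,\infty)} y\,\nu(\ud y)$. The classical Az\'ema--Yor argument, via exponential martingales of the maximum or the excursion computation of $P(\sup_{s\le\sigma} W_s \ge x)$, shows $W_\sigma\sim\nu$ whenever $\nu$ is integrable with mean $m \ge 0$. In that case the stopping rule is inactive until the maximum exceeds $m$.

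The crucial point is that $W$ never leaves $J$ before $\sigma$. On the upper side, the maximum of $W$ up to $\sigma$ is bounded by the right end of $\supp\nu$, which lies in $\overline{J}$. This is because once $\sup W$ reaches $\beta_+$ we have $\psi_\nu(x) = x$, which forces immediate stopping. On the lower side, $W_\sigma \ge \psi_\nu(\cdot)$ lies in $\supp\nu\subset J$, and $W$ is stopped as soon as it reaches the barrier. Hence in cases (i) and (ii) $W$ does not exit below $S(\ell)$, and when $S(\ell)>-\infty$ (case (iv), with $m=0$) the barrier stays above $S(\ell)$.

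Translating back through $S^{-1}$ and the time change gives exactly $T^{AY}_\mu$ with $h_+ = S^{-1}\circ\psi_\nu\circ S$. This matches the stated formula for $h_+^{-1}$, since $\psi_\nu(S(z)) = \mu([z,\rho))^{-1}\int_{[z,\rho)} S\,\ud\mu$. The case $m^\mu\le0$ follows by reflection $y\mapsto -y$, which yields the rule based on $N_t$ and $h_-$.

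The main obstacle is the case of mean $m>0$ in case (i) or (ii). There the Brownian Az\'ema--Yor time is not uniformly integrable, so $E[\sigma]=\infty$. I must still verify that $\sigma<\infty$ a.s. and that the law identity survives. For this I would use the explicit formula $P(\sup_{s\le\sigma} W_s\ge x) = \exp\bigl(-\int_0^x \ud\bar F(y)/(\psi^{-1}\text{-type terms})\bigr)$, derived by localization at levels $x$. Finiteness then follows because $W$ a.s. reaches every level in $(0,\beta_+)$ and every barrier below, which hold in the infinite-endpoint directions. Alternatively, the whole argument can be cited from \cite{PP01,CH04}, as the paper does.
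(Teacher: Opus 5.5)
The paper gives no proof of this lemma; it imports it from Lemma~9 of \cite{CH04} and Theorem~2.1 of \cite{PP01}. Your route is the standard argument behind those results: pass to natural scale, time-change $S(R)$ into a Brownian motion $W$ run until it leaves $J=(S(\ell),S(\rho))$, get necessity from one-sided boundedness, and get sufficiency from the Brownian Az\'ema--Yor embedding of $\nu=\mu\circ S^{-1}$. Your necessity part is correct as written.

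There is, however, an inversion in your construction that must be fixed. The barycenter function satisfies $\psi_\nu(x)\ge x$, so $W_t\le\sup_{s\le t}W_s\le\psi_\nu(\sup_{s\le t}W_s)$ for every $t$. Hence your rule $\sigma=\inf\{t: W_t\le\psi_\nu(\sup_{s\le t}W_s)\}$ stops at time $0$. The Az\'ema--Yor barrier is $\psi_\nu^{-1}$, i.e.\ one stops when $\sup_{s\le t}W_s\ge\psi_\nu(W_t)$. Correspondingly $h_+=S^{-1}\circ\psi_\nu^{-1}\circ S$, and it is $h_+^{-1}$, not $h_+$, that equals $S^{-1}\circ\psi_\nu\circ S$; that is exactly what the displayed formula in the statement says. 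Your verbal description already fits the corrected barrier: the rule is inactive until the maximum exceeds $m$, because $\psi_\nu\ge m$, and it is the identity beyond $\beta_+$.

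Once corrected, your ``main obstacle'' for $m>0$ needs no excursion computation. The rule does nothing before $H_m$, and for $t\ge H_m$ the running maximum of $W$ coincides with that of the path restarted at $W_{H_m}=m$. So $\sigma=H_m+\sigma_0\circ\theta_{H_m}$, with $\sigma_0$ the classical centered Az\'ema--Yor time from $m$. Since $H_m<\infty$ a.s.\ and, in case (ii), $m<S(\rho)$ forces $H_m<H_{S(\rho)}$, the strong Markov property transfers both finiteness and the law identity. This is the same restarting device the paper uses in the proof of Theorem~\ref{thm:mainthm}.

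Finally, your statement that the maximum up to $\sigma$ is bounded by the right end of $\supp\nu$, ``which lies in $\overline{J}$'', does not rule out $W$ touching $S(\rho)$ when $\sup\supp\nu=S(\rho)<\infty$. That would make the time-changed stopping time infinite. You need $\sup_{s\le\sigma}W_s<S(\rho)$ a.s., which follows from the Az\'ema--Yor law of the maximum together with $\nu(\{S(\rho)\})=0$.
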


To study the embedding problem involving ruin, we first establish the relationship
between the ruin time $\tau_0$ and the Az\'ema--Yor stopping time
$T_\mu^{AY}$ defined in \eqref{Tmu1} or \eqref{Tmu2}.

\begin{proposition}\label{prop:Prop1}
Let $\mu$ be a probability measure supported on $[0,\rho)$ such that the scale function $S$ is $\mu$-integrable. Assume that $\mu$ is attainable for the diffusion $R$, and let $T_\mu^{AY}$ be the Az\'ema--Yor stopping time constructed in Lemma~\ref{lem:thm2}. Then the following statements hold.
	\begin{itemize}
\item[\textup{(i)}] If $m^\mu=0$, then $\mathbb{P}\bigl(\tau_0<T_\mu^{AY}\bigr)=0$.
\item[\textup{(ii)}] If $m^\mu>0$ and $S(\ell)=-\infty$, then $ \mathbb{P}\bigl(\tau_0<T_\mu^{AY}\bigr)\leq\frac{m^\mu}{m^\mu-S(0)}$.
\item[\textup{(iii)}] If $m^\mu<0$ and $S(\rho)=\infty$, then $\mathbb{P}\bigl(\tau_0<T_\mu^{AY}\bigr)=0$.
\end{itemize}
\end{proposition}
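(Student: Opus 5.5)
The plan is to work throughout with the natural-scale process $Y_t:=S(R_t)$. By the definition \eqref{eq:scale} of the scale function, $Y$ is a continuous local martingale with $Y_0=S(r_0)=0$. The key structural fact I will use is a property of the Az\'ema--Yor boundaries in Lemma~\ref{lem:thm2}: since $\mu$ is supported on $[0,\rho)$, the function $h_+^{-1}$ is defined through barycentres of $\mu$ restricted to sets $[z,\rho)$ with $z$ in the support range. Hence wherever $h_+$ is finite it takes values $h_+(x)\geq \inf\supp\mu\geq 0$. Symmetrically, for $h_-$ we have $h_-(x)=x$ as soon as $x\le S^{-1}(\beta_-)$, and $S^{-1}(\beta_-)=\inf\supp\mu\ge 0$.

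For (iii), and for (i) viewed as the case $m^\mu\le 0$, I use the representation \eqref{Tmu2}. Continuity of paths gives $N_{\tau_0}=0$ on $\{\tau_0<\infty\}$. If the running minimum $N_t$ ever reaches $\inf\supp\mu\ge 0$, then $h_-(N_t)=N_t\le R_t$, so $T_\mu^{AY}$ has occurred by that time. Since the running minimum must pass through $\inf\supp\mu$ no later than it reaches $0$, we get $T_\mu^{AY}\le\tau_0$ almost surely, i.e.\ $\mathbb{P}(\tau_0<T_\mu^{AY})=0$.

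In case (iii) we need $S(\rho)=\infty$ only so that Lemma~\ref{lem:thm2} guarantees attainability and $T_\mu^{AY}<\infty$. In case (i) the admissible cases of Lemma~\ref{lem:thm2} include (iv), and \eqref{Tmu2} is still available because $m^\mu\le 0$. As a cross-check, I will also verify (i) via \eqref{Tmu1}. Since $h_+=-\infty$ only for $x\le S^{-1}(0)=r_0$, and a regular diffusion satisfies $M_t>r_0$ for all $t>0$ almost surely, the boundary is $\ge 0$ for all $t>0$. Hitting $0$ would then force $R_t\le h_+(M_t)$, so again $T_\mu^{AY}\le \tau_0$.

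For (ii) I use \eqref{Tmu1} with $b:=S^{-1}(m^\mu)>r_0$. As long as $M_t\le b$ the boundary is $-\infty$ and the process cannot be stopped. Once $M_t>b$, the boundary $h_+(M_t)$ is $\ge 0$, so by the argument above ruin cannot strictly precede stopping. Therefore
\[
\{\tau_0<T_\mu^{AY}\}\subseteq\{\tau_0<\tau_b\},\qquad \tau_b:=\inf\{t: R_t= b\}.
\]
Applying optional stopping to the bounded martingale $Y_{t\wedge\tau_0\wedge\tau_b}$, which takes values in $[S(0),m^\mu]$, gives
\[
\mathbb{P}(\tau_0<\tau_b)=\frac{S(b)-S(r_0)}{S(b)-S(0)}=\frac{m^\mu}{m^\mu-S(0)}.
\]
This requires $\tau_0\wedge\tau_b<\infty$ almost surely, which holds because a regular non-explosive diffusion exits the bounded interval $(0,b)$ in finite time.

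The main obstacle is the careful justification of the boundary facts in the first paragraph. Specifically, I must show that $h_+(x)\ge \inf\supp\mu$ on the whole region where $h_+$ is finite, including the handling of atoms and of the endpoint $x=S^{-1}(\beta_+)$ with its generalized inverse. I will argue this directly from monotonicity of the barycentre map $z\mapsto \mu([z,\rho))^{-1}\int_{[z,\rho)}S\,\ud\mu$. At $z=\inf\supp\mu$ this map equals $m^\mu$, so values $S(x)>m^\mu$ correspond to $z\ge\inf\supp\mu\ge0$. The regularity claim that $M_t>r_0$ immediately for $t>0$ is standard for diffusions under the Engelbert--Schmidt conditions.
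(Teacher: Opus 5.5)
Your proposal is correct and follows essentially the same route as the paper. In each case you show that the Az\'ema--Yor barrier is already nonnegative when ruin could occur ($h_+(M_t)\ge 0$ once $M_t$ exceeds $S^{-1}(m^\mu)$, and $h_-(0)=0$). This makes (i) and (iii) a contradiction at $\tau_0$ and reduces (ii) to the two-sided exit probability $\mathbb{P}\bigl(\tau_0<H_{S^{-1}(m^\mu)}\bigr)=m^\mu/(m^\mu-S(0))$. Your differences from the paper only tighten the same argument: you use regularity to rule out $M_{\tau_0}=S^{-1}(m^\mu)$ exactly, where $h_+=-\infty$ (the paper passes over this with a strict inequality), and you treat both barrier constructions when $m^\mu=0$, whereas the paper uses only \eqref{Tmu1} for (i).
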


\begin{proof}
First assume that $m^\mu>0$ and $S(\ell)=-\infty$ in which case the Az\'ema--Yor stopping time $T_\mu^{AY}$ is given by \eqref{Tmu1}. On the event $\{\tau_0<T_\mu^{AY}\}$, the process has not yet
crossed the Az\'ema--Yor upper barrier $h_+$ at time $\tau_0$. Since $R_{\tau_0}=0$, we have
\[
    \bigl\{\tau_0<T_\mu^{AY}\bigr\} \subseteq \bigl\{0>h_+(M_{\tau_0})\bigr\}.
\]
By the definition of $h_+$ and the assumption $\supp\mu\subseteq [0,\rho)$,
\[
    h_+^{-1}(0)
    =
    S^{-1}\left(
        \frac{1}{\mu([0,\rho))}
        \int_{[0,\rho)}S(u)\,\mu(\ud u)
    \right)
    =
    S^{-1}(m^\mu).
\]
Since $m^\mu>0=S(r_0)$ and $S$ is strictly increasing, we have $S^{-1}(m^\mu)>r_0$. Therefore,
\[
    \bigl\{0>h_+(M_{\tau_0})\bigr\}
    \subseteq
    \bigl\{M_{\tau_0}<S^{-1}(m^\mu)\bigr\}.
\]
Let $H_y:=\inf\{t\geq 0:R_t=y\}$, then
\[
    \bigl\{M_{\tau_0}<S^{-1}(m^\mu)\bigr\}
    \subseteq
    \bigl\{\tau_0 < H_{S^{-1}(m^\mu)}\bigr\}.
\]
Using the standard two-sided hitting probability for one-dimensional diffusions, we obtain
\[
    \mathbb{P}\bigl(\tau_0 < H_{S^{-1}(m^\mu)}\bigr)
    =
    \frac{
        S(S^{-1}(m^\mu))-S(r_0)
    }{
        S(S^{-1}(m^\mu))-S(0)
    }.
\]
Since $S(r_0)=0$, this gives
\[
    \mathbb{P}\bigl(\tau_0<T_\mu^{AY}\bigr)
    \leq
    \frac{m^\mu}{m^\mu-S(0)}.
\]

Next assume that $m^\mu=0$. Again using the upper-barrier construction \eqref{Tmu1}, we have
\[
    h_+^{-1}(0)=S^{-1}(m^\mu)=S^{-1}(0)=r_0.
\]
Hence
\[
    \bigl\{\tau_0<T_\mu^{AY}\bigr\}
    \subseteq
    \bigl\{0>h_+(M_{\tau_0})\bigr\}
    \subseteq
    \bigl\{M_{\tau_0}<r_0\bigr\}.
\]
However, $M_{\tau_0}\geq R_0=r_0$ almost surely. Therefore, $\mathbb{P}\bigl(\tau_0<T_\mu^{AY}\bigr)=0$.

Finally assume that $m^\mu<0$ and $S(\rho)=\infty$. Then the Az\'ema--Yor stopping time is given by \eqref{Tmu2}. On the event $\{\tau_0<T_\mu^{AY}\}$, the process has not yet crossed the lower barrier $h_-$ at time $\tau_0$. Since $R_{\tau_0}=0$ and $N_{\tau_0}=0$, we obtain
\[
    \bigl\{\tau_0<T_\mu^{AY}\bigr\}
    \subseteq
    \bigl\{0<h_-(N_{\tau_0})\bigr\}
    =
    \bigl\{0<h_-(0)\bigr\}.
\]
By the definition of the lower Az\'ema--Yor barrier and the support condition $\supp\mu\subseteq [0,\rho)$, the lower endpoint of the support is not below $0$. Hence $h_-(0)=0$.
Consequently,
\[
    \bigl\{\tau_0<T_\mu^{AY}\bigr\}\subseteq \{0<0\}=\emptyset,
\]
and therefore $\mathbb{P}\bigl(\tau_0<T_\mu^{AY}\bigr)=0$.
\end{proof}

We now state our first main result, which gives a criterion for determining which distributions are attainable by the ruined diffusion. This solves Problem~\textbf{A} in the introduction.
\begin{theorem}\label{thm:mainthm}
Let $\mu$ be a probability measure  supported on $[0,\rho)$ such that the scale function $S$ is $\mu$-integrable. Then $\mu$ is attainable for the ruined diffusion $\bar R$ if and only if
\[
    m^\mu=0,
    \qquad\text{or}\qquad
    m^\mu<0 \ \text{ and }\ S(\rho)=\infty .
\]
Equivalently, $\mu$ is attainable for $\bar R$ if and only if $m^\mu\leq 0$ and, whenever $m^\mu<0$, one has $S(\rho)=\infty$.
\end{theorem}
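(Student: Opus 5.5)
We prove the two implications separately. Sufficiency should follow quickly from Lemma~\ref{lem:thm2} and Proposition~\ref{prop:Prop1}. Necessity needs a martingale argument for the scale process of the ruined diffusion.

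\emph{Sufficiency.} Suppose first that $m^\mu=0$. Since $\mu$ lives on $[0,\rho)\subset I$ and $S$ is $\mu$-integrable, every case (i)--(iv) of Lemma~\ref{lem:thm2} allows $m^\mu=0$, so $\mu$ is attainable for $R$ via $T:=T_\mu^{AY}$. Proposition~\ref{prop:Prop1}(i) gives $\mathbb{P}(\tau_0<T)=0$, hence $T\le\tau_0$ a.s. Therefore $\bar R_T=R_{T\wedge\tau_0}=R_T\sim\mu$.

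Now suppose $m^\mu<0$ and $S(\rho)=\infty$. Case (i) or (iii) of Lemma~\ref{lem:thm2} applies, because either $S(\ell)=-\infty$, or $S(\ell)>-\infty$ together with $m^\mu\le 0$. So $\mu$ is attainable for $R$, and Proposition~\ref{prop:Prop1}(iii) again yields $T\le\tau_0$ a.s.

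\emph{Necessity.} Let $\tau$ be a.s.\ finite with $\bar R_\tau\sim\mu$, and put $\sigma:=\tau\wedge\tau_0$, so that $\bar R_\tau=R_\sigma$. The process $Y_t:=S(R_{t\wedge\sigma})$ is a continuous local martingale with values in $[S(0),S(\rho))$, which is bounded below by $S(0)>-\infty$. A nonnegative local martingale (after shifting by $-S(0)$) is a supermartingale, so Fatou's lemma gives
\[
  m^\mu=\mathbb{E}[S(R_\sigma)]\le \liminf_{t\to\infty}\mathbb{E}[Y_t]\le Y_0=S(r_0)=0 .
\]
This proves $m^\mu\le 0$.

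It remains to exclude $m^\mu<0$ when $S(\rho)<\infty$. In that case $Y$ takes values in the bounded interval $[S(0),S(\rho)]$. A bounded local martingale is a uniformly integrable martingale, and because $\sigma<\infty$ a.s., optional stopping applies and gives $\mathbb{E}[S(R_\sigma)]=S(r_0)=0$. So $m^\mu=0$, which is exactly the claimed dichotomy.

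\emph{Main obstacle.} The delicate point is the supermartingale and optional stopping argument. The Fatou step requires $Y_t\to S(R_\sigma)$ a.s., which holds by continuity and $\sigma<\infty$. In the bounded case one must also make sure that the diffusion cannot approach $\rho$ and escape. This is handled by the standing non-explosion assumption together with the finiteness of $\tau$. One also has to check that $S(R_{t\wedge\sigma})$ is genuinely a local martingale under the Engelbert--Schmidt conditions. I would argue this via the generalized Itô formula for the scale function, which is absolutely continuous with an absolutely continuous derivative, so that $S(R)$ is a time-changed Brownian motion.
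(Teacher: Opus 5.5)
Your proof is correct, but it departs from the paper's argument in two places.

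\emph{Sufficiency for $m^\mu<0$, $S(\rho)=\infty$.} You invoke Proposition~\ref{prop:Prop1}(iii) directly. The running-minimum Az\'ema--Yor time \eqref{Tmu2} from Lemma~\ref{lem:thm2} cannot outlive $\tau_0$, because it stops at the latest when the running minimum reaches $\inf\supp\mu\ge 0$. The paper does not use Proposition~\ref{prop:Prop1}(iii) here. It builds a two-stage time instead:
\begin{enumerate}
\item wait for $H_{S^{-1}(m^\mu)}$, which is a.s.\ finite and precedes ruin because $S(\rho)=\infty$;
\item restart via the strong Markov property with the shifted scale function $S-m^\mu$ and reuse the already proved case $m=0$.
\end{enumerate}
It also treats $\mu=\delta_0$ separately, embedding it by $\tau_0$ itself. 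Your route is shorter, needs no restart and no special handling of $\delta_0$, and uses a result the paper has already established. The paper's route only needs the zero-mean embedding, and it exhibits the explicit ``first drop to $S^{-1}(m^\mu)$, then embed'' strategy emphasized in the subsequent remark. The time \eqref{Tmu2} is in fact implicitly of this form, since $h_-=\infty$ above $S^{-1}(m^\mu)$.

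\emph{Necessity when $S(\rho)<\infty$.} You observe that $S(R_{\cdot\wedge\sigma})$ is a bounded local martingale, hence a uniformly integrable martingale, and conclude $m^\mu=0$ by optional stopping. The paper instead notes that $\mu$ is attainable for the unruined diffusion $R$ via $T_\mu\wedge\tau_0$. It then appeals to the ``only if'' direction of Lemma~\ref{lem:thm2}, which rules out $m^\mu<0$ when $S(\rho)<\infty$. Your argument is self-contained and elementary. The paper's is a one-liner that leans on the imported Cox--Hobson characterization.

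The $m^\mu=0$ case and the supermartingale bound $m^\mu\le 0$ coincide with the paper's.
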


\begin{proof}
We first prove the only if part: Suppose that $\mu$ is attainable for the ruined diffusion $\bar R$. Then there exists an almost surely finite stopping time
$T_\mu$ such that $\bar R_{T_\mu}=R_{T_\mu\wedge \tau_0}\sim \mu$.
In particular, $\mu$ is also attainable for the original diffusion $R$, with
embedding time $T_\mu\wedge \tau_0$. 

For $t\geq 0$, define $Y_t:=S(R_t)$. By the definition of the scale function, $Y$ is a local martingale. Therefore
$ Y^{\tau_0}_\cdot:=Y_{\cdot\wedge \tau_0}=S(R_{\cdot\wedge \tau_0})$ is also a local martingale. Since $R_{t\wedge \tau_0}\geq 0$ and $S$ is increasing, we have $Y^{\tau_0}_t\geq S(0)$ for all $t\geq 0$. Thus $Y^{\tau_0}$ is a supermartingale. Applying optional sampling to the bounded-below supermartingale $Y^{\tau_0}$ at the almost surely finite stopping time $T_\mu$, we obtain
\[
    m^\mu=
    \int_{[0,\rho)}S(u)\,\mu(\ud u)= \mathbb{E}\bigl[S(R_{T_\mu\wedge \tau_0})\bigr]
    = \mathbb{E}\bigl[Y^{\tau_0}_{T_\mu}\bigr]
    \leq \mathbb{E}\bigl[Y^{\tau_0}_0\bigr]
    = S(r_0) = 0 .
\]
In addition, if $m^\mu<0$, then the attainability of $\mu$ for the original diffusion $R$, together with Lemma~\ref{lem:thm2}, implies that necessarily $S(\rho)=\infty$. 

To prove the if part, we first suppose that $m^\mu=0$. By Lemma~\ref{lem:thm2}, there exists an Az\'ema--Yor stopping time $T_\mu^{AY}$ such that $R_{T_\mu^{AY}}\sim \mu$. Moreover, by Proposition~\ref{prop:Prop1}, $\mathbb{P}\bigl(\tau_0<T_\mu^{AY}\bigr)=0$. Hence $T_\mu^{AY}\leq \tau_0$ almost surely, and therefore $\bar R_{T_\mu^{AY}}=R_{T_\mu^{AY}\wedge \tau_0}=R_{T_\mu^{AY}} \sim \mu$. Thus $\mu$ is attainable for the ruined diffusion $\bar R$.

Next suppose that $m^\mu<0$ and $S(\rho)=\infty$. Since $\supp\mu\subseteq [0,\rho)$ and $S$ is increasing, we have $m^\mu\geq S(0)$.
If $m^\mu=S(0)$, then necessarily $\mu=\delta_0$, the Dirac measure at $0$. In this case, $\bar R_{\tau_0}=0$
on $\{\tau_0<\infty\}$. Define $H_y:=\inf\{t\geq 0:R_t=y\}$; since $S(\rho)=\infty$, the standard two-sided hitting
probability gives
\[
\mathbb{P}^{r_0}(\tau_0<\infty) = \lim_{b\uparrow\rho} \mathbb{P}^{r_0}(H_0<H_b)=\lim_{b\uparrow\rho}\frac{S(b)-S(r_0)}{S(b)-S(0)}
=1.
\]
Thus $\tau_0$ embeds $\delta_0$ in the ruined diffusion.

It remains to consider the case $S(0)<m^\mu<0$. Then, $0<S^{-1}(m^\mu)<r_0$. Here we will proceed in two steps: First we aim to reach the constant level $0<S^{-1}(m^\mu)$, then we aim from there for the target distribution. Using again the two-sided hitting probability and the assumption $S(\rho)=\infty$, we obtain
\[
\mathbb{P}^{r_0}(H_{S^{-1}(m^\mu)}<\infty)=\lim_{b\uparrow\rho}\mathbb{P}^{r_0}(H_{S^{-1}(m^\mu)}<H_b)=\lim_{b\uparrow\rho}\frac{S(b)-S(r_0)}{S(b)-S(S^{-1}(m^\mu))}=1.
\]
Moreover, since $0<S^{-1}(m^\mu)<r_0$ and the sample paths are continuous, $H_{S^{-1}(m^\mu)}<\tau_0$ almost surely.

Now, we restart the original process $R$ from the lower level $S^{-1}(m^\mu)$, denoted by $\widetilde R$. For this restarted diffusion, define the shifted scale function
\[
\widetilde S(x):=S(x)-S\bigl(S^{-1}(m^\mu)\bigr)=S(x)-m^\mu .
\]
Then $\widetilde S\bigl(S^{-1}(m^\mu)\bigr)=0$ and
\[
\widetilde m^\mu:=\int_{[0,\rho)}\widetilde S(u)\,\mu(\ud u)=\int_{[0,\rho)}\bigl(S(u)-m^\mu\bigr)\,\mu(\ud u)=m^\mu-m^\mu=0 .
\]
By the already proved case $m=0$, applied to the diffusion started from $S^{-1}(m^\mu)$, there exists an almost surely finite stopping time $\widetilde T_\mu^{AY}\leq \widetilde \tau_0$ such that $\widetilde{\bar R}_{\widetilde T_\mu^{AY}}=\widetilde R_{\widetilde T_\mu^{AY}}\sim \mu$, where $\widetilde \tau_0:=\inf\{t\geq 0:\widetilde R_t\leq 0\}$.

Now, define the stopping time for the original process by restarting this construction
at $H_{S^{-1}(m^\mu)}$:
\[
    T_\mu:=H_{S^{-1}(m^\mu)}+\widetilde T_\mu^{AY}\circ\theta_{H_{S^{-1}(m^\mu)}},
\]
where $\theta$ denotes the usual shift operator. By the strong Markov property
of $R$ at $H_{S^{-1}(m^\mu)}$, the process $(R_{H_{S^{-1}(m^\mu)}+t})_{t\geq 0}$ has the same law as the diffusion started from $S^{-1}(m^\mu)$, and is independent of the term $R_{H_{S^{-1}(m^\mu)}}=S^{-1}(m^\mu)$ on which we conditioned. Therefore, for every bounded Borel function $\varphi$,
\[
\mathbb{E}^{r_0}\bigl[\varphi(R_{T_\mu})\bigr]=    \mathbb{E}^{S^{-1}(m^\mu)}\bigl[\varphi(\widetilde{R}_{\widetilde T_\mu^{AY}})\bigr]=\int_{[0,\rho)} \varphi(u)\,\mu(\ud u).
\]
Thus $R_{T_\mu}\sim \mu$.
Since $H_{S^{-1}(m^\mu)}<\tau_0$ almost surely and the shifted stopping time satisfies
\[
    \widetilde T_\mu^{AY}\circ\theta_{H_{S^{-1}(m^\mu)}}
    \leq
    \tau_0-H_{S^{-1}(m^\mu)}
    \qquad \text{a.s.},
\]
we have $T_\mu\leq \tau_0$ almost surely. Consequently,
\[
\bar R_{T_\mu}=R_{T_\mu\wedge \tau_0}=R_{T_\mu}\sim \mu .
\]
Therefore $\mu$ is attainable for the ruined diffusion $\bar R$.
\end{proof}

The next proposition discusses the remaining case in which $S(\rho)<\infty$. Combining Lemma~\ref{lem:thm2} with Theorem~\ref{thm:mainthm}, we see that, if $S(\rho)<\infty$, then a probability measure $\mu$ with $\supp\mu\subseteq[0,\rho)$ is attainable for the ruined diffusion if and only if $m^\mu=0$. However, if $m^\mu<0$, then $\mu$ can still be dominated by an attainable distribution.

\begin{proposition}\label{prop:sa}
Let $\mu$ be a probability measure supported on $[0,\rho)$ such that the scale function $S$ is $\mu$-integrable.
Assume that $S(\rho)<\infty$ and $m^\mu<0$. Then $\mu$ is super-attainable for the ruined diffusion $\bar R$.
\end{proposition}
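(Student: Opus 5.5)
We will construct an explicit attainable $\nu\succeq\mu$ with $m^\nu=0$. By the remark preceding the proposition, which combines Lemma~\ref{lem:thm2} with Theorem~\ref{thm:mainthm}, such a $\nu$ is attainable for $\bar R$ when $S(\rho)<\infty$. The idea is to push mass of $\mu$ upward, which can only increase the scaled mean, until the scaled mean reaches $0$.

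\textbf{Construction.} Consider the family of measures obtained by moving the mass of $\mu$ lying to the left of a threshold up to that threshold. For $c\in[0,\rho)$ set
\[
\nu_c:=\mu|_{[c,\rho)}+\mu\bigl([0,c)\bigr)\,\delta_c,\qquad g(c):=m^{\nu_c}=\int_{[c,\rho)}S(u)\,\mu(\ud u)+S(c)\,\mu\bigl([0,c)\bigr).
\]
Then $F_{\nu_c}(x)=0\le F_\mu(x)$ for $x<c$, and $F_{\nu_c}=F_\mu$ for $x\ge c$, so $\nu_c\succeq\mu$. Moreover $S$ is $\nu_c$-integrable and $\nu_c$ is supported in $[0,\rho)$.

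\textbf{Key step: choosing $c$.} The function $g$ satisfies $g(0)=m^\mu<0$. Monotone and dominated convergence (using that $S$ is bounded on $[0,\rho)$ since $S(\rho)<\infty$) give $g(c)\to S(\rho)>0$ as $c\uparrow\rho$. Also $g$ is nondecreasing and continuous, because $g(c)=\int S(u\vee c)\,\mu(\ud u)$ and $c\mapsto S(u\vee c)$ is continuous, increasing and bounded by $|S(0)|\vee S(\rho)$. By the intermediate value theorem there is $c^*\in(0,\rho)$ with $g(c^*)=0$. Take $\nu:=\nu_{c^*}$. Then $m^\nu=0$, so Theorem~\ref{thm:mainthm} gives that $\nu$ is attainable for $\bar R$, and since $\nu\succeq\mu$, $\mu$ is super-attainable.

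\textbf{Main obstacle.} The only delicate point is the limit $g(c)\to S(\rho)$, which requires $S(\rho)>0$. This holds because $\rho>r_0$, $S$ is strictly increasing and $S(r_0)=0$. The limit itself follows from dominated convergence applied to $S(u\vee c)\uparrow S(\rho)$ for every $u<\rho$, using that $\mu$ is a probability measure on $[0,\rho)$.
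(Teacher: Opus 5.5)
Your proof is correct, and its overall strategy is the paper's: exhibit a $\nu\succeq\mu$ with $m^\nu=0$ by pushing mass upward, then apply the $m^\nu=0$ case of Theorem~\ref{thm:mainthm}. The dominating measure is different, though.

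The paper leaves a fraction $c$ of the mass below $r_0$ in place and moves the remaining fraction $1-c$ to the single point $r_0$, namely $\nu=c\,\mu|_{[0,r_0)}+(1-c)\,\mu\bigl([0,r_0)\bigr)\delta_{r_0}+\mu|_{[r_0,\rho)}$. Because $S(r_0)=0$, the relocated mass contributes nothing to the scaled mean. So $m^\nu$ is affine in $c$, and $c=-\int_{[r_0,\rho)}S\,\ud\mu\big/\int_{[0,r_0)}S\,\ud\mu$ is explicit, with no continuity or limit argument needed.

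Your threshold family $\nu_c$ instead needs the intermediate value theorem. It produces a different zero-mean (hence undominated) law: a floor at $c^*>0$, with no mass at the ruin level, rather than a partial relocation to $r_0$.

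Your argument can also be shortened. Since $S(u\vee r_0)\geq S(r_0)=0$, you already have $g(r_0)\geq 0$, so $c^*\in(0,r_0]$. Continuity of $g$ on $[0,r_0]$ then follows by dominated convergence with the bound $|S(u\vee c)|\leq |S(u)|+|S(0)|$. So neither the limit $c\uparrow\rho$, nor the boundedness of $S$, nor $S(\rho)<\infty$ is needed for this step.
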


\begin{proof}
Let $F^\mu$ be the cumulative distribution function (CDF) of $\mu$. We define a new distribution $\nu$ with its CDF
\[
F^\nu(x)
    =
    \begin{cases}
        cF^\mu(x), & x<r_0\\[4pt]
        F^\mu(x), & x\geq r_0
    \end{cases},\ \ \mbox{with}\ \ c= - \frac{ \int_{[r_0,\rho)} S(u)\mu(\ud u)}{\int_{[0,r_0)}S(u)\mu(\ud u)}.
\]
Since $m^\mu<0$, $S(r_0)=0$, and $S$ is increasing, we have
\[
0\leq\int_{[r_0,\rho)} S(u)\,\mu(\ud u)< -\int_{[0,r_0)} S(u)\,\mu(\ud u)<\infty,
\]
whence $c\in[0,1)$. Therefore, $F^\nu(x)\leq F^\mu(x)$ for all  $x\in\mathbb{R}$, which implies $\nu\succeq\mu$. Moreover, since $S(r_0)=0$, $m^\nu=c\int_{[0,r_0)}S(u)\,\mu(\ud u)+\int_{[r_0,\rho)}S(u)\,\mu(\ud u)=0$.
Also, $\int_{[0,\rho)}|S(u)|\,\nu(\ud u)\leq\int_{[0,\rho)}|S(u)|\,\mu(\ud u)<\infty$.
Thus, by Theorem~\ref{thm:mainthm}, $\nu$ is attainable for the ruined diffusion
$\bar R$, i.e., $\mu$ is super-attainable.
\end{proof}

\begin{remark}
Theorem~\ref{thm:mainthm} suggests a modified Az\'ema--Yor stopping-time construction: one first waits until the diffusion hits the level $S^{-1}(m^\mu)$ and then applies an Az\'ema--Yor type stopping time from that level. When $S(0)<m^\mu\leq 0$, we have $0\leq S^{-1}(m^\mu)\leq r_0$.
Thus, by continuity of the sample paths, the hitting time of $S^{-1}(m^\mu)$ always occurs before ruin. This allows us to restart the diffusion at $S^{-1}(m^\mu)$, shift the scale function so that the scaled mean becomes zero, and then apply Lemma~\ref{lem:thm2}.

If the assumption $S(\rho)=\infty$ is removed, then the hitting time $H_{S^{-1}(m^\mu)}$ need not be finite almost surely. In that case, the modified Az\'ema--Yor construction above may fail. Proposition~\ref{prop:sa} addresses this obstruction in the case $S(\rho)<\infty$: although a distribution $\mu$ with $m^\mu<0$ is not attainable for the ruined diffusion, one can construct another distribution $\nu$ such that $\nu\succeq\mu$ and $m^\nu=0$, which implies that $\mu$ is super-attainable.
\end{remark}
\begin{remark}\label{rem:compCS}
Comparing the results in this section with those obtained by Carr and Sturm \cite[Section 4]{CS26} for the case without ruin, we see that the big difference is for distributions with positive scaled mean $m^\mu$ in the case that $S(\ell) = -\infty$. Without ruin, they can be attained, while in the case with ruin they cannot, as discussed in Theorem \ref{thm:mainthm}.
\end{remark}

\section{Optimality under extended \texorpdfstring{$f$}{f}-divergence}
\label{sec:opt}
In the previous section, we have shown which probability measures are (super-)attainable and found that the sign of the scaled mean plays a fundamental role in the attainability and super-attainability of a target distribution. However, this leaves open two fundamental questions: If the distribution chosen by an investor is not super-attainable, is there a way for them to find a similar distribution that is attainable? Of course, one could just provide an output of the scaled mean and use it as an indicator similar to the "budget meter" in the original distribution builder \cite{SGB00}. A more robust approach than fidgeting around would be to determine the mathematically closest distribution that is super-attainable, which is \textbf{Problem B}. Similarly, it might be that the chosen target distribution is super-attainable but not optimal, as there are other distributions that are strictly dominating the chosen distribution and are still attainable, leading to the constrained optimization problem \textbf{Problem C}. To tackle these problems mathematically, one first has to determine what "the closest distribution" actually means --- there are many notions of distance between probability measures without a single one being the obviously correct choice. For the current paper we consider the discrepancy between probability measures being measured by an extended $f$-divergence; see, e.g., \cite{Csi67}. This is a very rich class of statistical distances that encompasses the classical Kullback--Leibler and R\'{e}nyi divergences as well as the Hellinger and Bhattacharyya distances and the total variation distance.

In a first step, we make our problem formulation mathematically precise. Throughout this section, $\mu$ is taken to be a fixed probability measure supported on $[0,\rho)$. Whenever there is no risk of confusion, notation depending on $\mu$ will be written without explicitly indicating this dependence. 

Let $\mathcal P$ denote the set of all probability measures on $[0,\rho)$ satisfying the scaled-integrability condition. Specifically, for any probability measure $\nu$,
\[
    \mathcal P:=\left\{\nu:\ \int_{[0,\rho)} |S(u)|\nu(\ud u)<\infty\right\}.
\]

For any probability measures $\mu, \nu\in\mathcal P$, recall that the Lebesgue--Radon--Nikod\`{y}m theorem provides a unique decomposition of $\nu$ with respect to $\mu$ as
\[
\nu=\nu^{ac}+\nu^\perp,\qquad\nu^{ac}\ll\mu,\qquad\nu^\perp\perp\mu,
\]
where $\nu^{ac}$ denotes the absolutely continuous part of $\nu$ with respect to $\mu$, and $\nu^\perp$ denotes the singular part of $\nu$ with respect to $\mu$. Moreover, there exists a unique nonnegative $\theta \in L^1 :=L^1\bigl([0, \rho), \mathcal{B}\bigl([0, \rho)\bigr), \mu\bigr)$, the space of $\mu$-integrable functions, such that
\[
\nu^{ac}(A) = \int_{[0,\rho)} \theta(u) \ind_A(u) \, \mu(\ud u) \qquad \text{for all } A \in \mathcal{B}\bigl([0, \rho)\bigr).
\]

Let $f:(0,\infty)\to(-\infty,\infty)$ be a convex function extended to $(-\infty,\infty]$ by setting $f(0)=f(0+):=\lim_{x\downarrow 0}f(x)$ and $f(x) = \infty$ for $x<0$. This extended function is lower semicontinuous on its entire domain as it is continuous on $(0,\infty)$. Its recession constant is well defined as
\[
    f'(\infty):=\lim_{x\to\infty}\frac{f(x)}{x}
    \in(-\infty,\infty].
\]

For the remainder of this section, we will adopt the following assumption.
\begin{ass}
\label{ass}
Without loss of generality, throughout this section we assume that $f\geq 0$, $f(1)=0$, and $f'(\infty)\geq 0$.
\end{ass}
Indeed, by convexity, there exists $\gamma\in\mathbb{R}$ such that
$f(x)\geq f(1)+\gamma(x-1)$ for all $x\geq 0$. Defining
\[
    \widetilde f(x):=f(x)-\gamma(x-1)-f(1),
\]
we have $\widetilde f$ is convex, $\widetilde f\geq 0$ , $\widetilde f(1)=0$, and $\widetilde f'(\infty)=f'(\infty)-\gamma\geq 0$.

\begin{definition}\label{def:f-div}
The extended $f$-divergence of $\nu$ with respect to $\mu$ is defined by
\begin{equation}\label{fdiv}
    D_f(\nu \, \Vert \, \mu)
    :=
    \int_{[0,\rho)} f\bigl(\theta(u)\bigr)\,\mu(\ud u)
    +f'(\infty)\nu^\perp\bigl([0,\rho)\bigr).
\end{equation}
When $f'(\infty)=\infty$, we adopt the usual convention that
$a\cdot\infty=0$ if $a=0$, and $a\cdot\infty=\infty$ if $a>0$.
\end{definition}

The notation $D_f(\nu \, \Vert \, \mu)$ emphasizes that the divergence is generally
not symmetric in its two arguments.

To make our problems precise, we define the following problems.

\noindent\textbf{Problem B.}
Given $\mu$ with $m^\mu>0$, solve
\begin{equation}\label{eq:B}
    \inf_{\nu\in\mathcal P}D_f(\nu \, \Vert \, \mu),
    \qquad
    \text{subject to}
    \qquad
    \int_{[0,\rho)}S(u)\,\nu(\ud u) \leq 0.
    \tag{B}
\end{equation}

\bigskip

\noindent\textbf{Problem C.}
Given $\mu$ with $m^\mu<0$, solve
\begin{equation}\label{eq:C}
\left\{\begin{aligned}
    \inf_{\nu\in\mathcal P}D_f(\nu \, \Vert \, \mu),
    \qquad
    &\text{subject to}
    \qquad
    \int_{[0,\rho)}S(u)\,\nu(\ud u)\leq 0,
    \qquad
    \nu\succeq\mu,\\
    &\text{and there exists no } \nu' \in \mathcal{P} \text{ such that } \\
    &\int_{[0,\rho)}S(u)\,\nu'(\ud u)\leq 0, \qquad
    \nu'\succ \nu.
    \end{aligned}\right.
    \tag{C}
\end{equation}

Since \textbf{Problem C} may be infeasible in settings where mass can escape to infinity, we also consider a compact-support version. For some $M \geq \sup \supp \mu$, we define
\begin{equation}\label{eq:CM}
\mathcal{P}^M := \bigl\{ \nu \in \mathcal{P} \, : \, \supp \nu \subseteq [0,M]\bigr\}.\tag{CM}
\end{equation}
Replacing $\mathcal P$ by $\mathcal P^M$ in \textbf{Problem C}, we obtain \textbf{Problem CM}.

For practical reasons we prefer a different formulation of the problems that stem from the fact that the optimizers are necessarily realized when the scaled mean vanishes: 

\bigskip
\noindent\textbf{Problem B'.}
Given $\mu$ with $m^\mu>0$, solve
\begin{equation}\label{eq:B'}
    \inf_{\nu\in\mathcal P}D_f(\nu \, \Vert \, \mu),
    \qquad
    \text{subject to}
    \qquad
    \int_{[0,\rho)}S(u)\,\nu(\ud u) = 0.
    \tag{B'}
\end{equation}

\bigskip

\noindent\textbf{Problem C'} (resp. \textbf{CM'}).
Given $\mu$ with $m^\mu<0$, solve
\begin{equation}\label{eq:C'}
    \inf_{\nu\in\mathcal P}D_f(\nu \, \Vert \, \mu),
    \qquad
    \text{subject to}
    \qquad
    \int_{[0,\rho)}S(u)\,\nu(\ud u) = 0, \qquad
    \nu\succeq\mu.
    \tag{C'}
\end{equation}
Resp. given $\mu$ with $\supp \mu \subseteq [0,M]$ and $m^\mu<0$, solve
\begin{equation}\label{eq:CM'}
    \inf_{\nu\in\mathcal{P}^M}D_f(\nu \, \Vert \, \mu),
    \qquad
    \text{subject to}
    \qquad
    \int_{[0,\rho)}S(u)\,\nu(\ud u) = 0, \qquad
    \nu\succeq\mu.
    \tag{CM'}
\end{equation}

The following proposition shows that, in the remainder of this section, it suffices to work with problems \eqref{eq:B'}, \eqref{eq:C'}, and \eqref{eq:CM'} when studying problems, \eqref{eq:B}, \eqref{eq:C}, and \eqref{eq:CM}, respectively.
\begin{prop}\label{prop:equiv}
Problems \eqref{eq:B}, \eqref{eq:B'}, \eqref{eq:C}, \eqref{eq:C'}, \eqref{eq:CM}, and \eqref{eq:CM'} are related as follows:
\begin{itemize} 
\item[\textup{(i)}] Problems \eqref{eq:B} and \eqref{eq:B'} have the same optimal value, and one admits an optimizer if and only if the other does. Moreover, every optimizer of problem \eqref{eq:B'} is also an optimizer of problem \eqref{eq:B}.
\item[\textup{(ii)}] The feasible sets of problems \eqref{eq:C} and \eqref{eq:C'} agree. 
\item[\textup{(iii)}] For $M\geq r_0$, the feasible sets of problems \eqref{eq:CM} and \eqref{eq:CM'} agree.
\end{itemize}
\end{prop}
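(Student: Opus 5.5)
Part (i) will follow from a convex-interpolation argument. Since the feasible set of \eqref{eq:B'} is contained in that of \eqref{eq:B}, we have $\inf\eqref{eq:B}\le\inf\eqref{eq:B'}$, and it suffices to show that every $\nu$ feasible for \eqref{eq:B} with $m^\nu<0$ can be replaced by some $\nu_\lambda$ with $m^{\nu_\lambda}=0$ and $D_f(\nu_\lambda\Vert\mu)\le D_f(\nu\Vert\mu)$.

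Take $\nu_\lambda=\lambda\mu+(1-\lambda)\nu$. The map $\lambda\mapsto m^{\nu_\lambda}$ is affine, negative at $\lambda=0$ and equal to $m^\mu>0$ at $\lambda=1$, so there is a unique $\lambda^*\in(0,1)$ with $m^{\nu_{\lambda^*}}=0$. The decomposition of $\nu_\lambda$ is
\[
\nu_\lambda^{ac}=\bigl(\lambda+(1-\lambda)\theta\bigr)\mu,\qquad \nu_\lambda^\perp=(1-\lambda)\nu^\perp .
\]
Convexity of $f$ together with $f(1)=0$ and $f\ge 0$ then gives
\[
D_f(\nu_\lambda\Vert\mu)\le(1-\lambda)D_f(\nu\Vert\mu)\le D_f(\nu\Vert\mu).
\]
The case $f'(\infty)=\infty$ is handled by the convention, since $\nu^\perp=0$ whenever $D_f<\infty$. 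Integrability $\nu_\lambda\in\mathcal P$ is clear.

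This yields equality of the optimal values. It also shows that an optimizer of \eqref{eq:B} with $D_f>0$ must satisfy $m=0$, because otherwise the factor $(1-\lambda)$ gives a strict improvement. If $D_f(\nu\Vert\mu)=0$ then $\nu=\mu$, which is infeasible because $m^\mu>0$; here I use that $f(x)=0$ only at $x=1$, or else argue directly that $D_f=0$ at a feasible point can be mapped to $\nu_{\lambda^*}$ with value $0$. In either case an optimizer of \eqref{eq:B} produces an optimizer of \eqref{eq:B'}. Conversely, an optimizer of \eqref{eq:B'} is feasible for \eqref{eq:B} and attains the common value, so it optimizes \eqref{eq:B}.

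For (ii), I will show that the feasible set of \eqref{eq:C} equals $\{\nu\in\mathcal P:\ m^\nu=0,\ \nu\succeq\mu\}$.

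First, take $\nu$ feasible for \eqref{eq:C'}, and suppose some $\nu'\succ\nu$ has $m^{\nu'}\le 0$. Strict first-order stochastic dominance together with strict monotonicity of $S$ gives $m^{\nu'}>m^\nu=0$ via the quantile coupling: $\nu'$ is the law of $G_{\nu'}(U)\ge G_\nu(U)$, with strict inequality on a set of positive measure. This is a contradiction, so $\nu$ is feasible for \eqref{eq:C}.

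Conversely, take $\nu$ feasible for \eqref{eq:C} and suppose $m^\nu<0$. The construction of Proposition~\ref{prop:sa} applied to $\nu$ in place of $\mu$ produces $\nu'$ with $F^{\nu'}=cF^\nu$ on $[0,r_0)$, $c\in[0,1)$, and $m^{\nu'}=0$. Then $\nu'\succeq\nu$, and the dominance is strict because $m^\nu<0$ forces $\nu([0,r_0))>0$. This contradicts the maximality condition in \eqref{eq:C}.

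Part (iii) uses the same argument. The construction only moves mass from $[0,r_0)$ to the point $r_0$, so it preserves support in $[0,M]$ whenever $M\ge r_0$.

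The main obstacle is the strictness claim $\nu'\succ\nu\Rightarrow m^{\nu'}>m^\nu$. It requires the quantile-coupling argument together with integrability of $S$ under both measures, to ensure the expectations are finite.
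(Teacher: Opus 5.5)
Your proposal is correct and follows the paper's proof essentially step for step. You use the interpolation $\nu_\lambda=\lambda\mu+(1-\lambda)\nu$ with $f(1)=0$ for (i), the strict increase of the scaled mean under $\succ$ for one inclusion in (ii), and pushing mass from $[0,r_0)$ up to $r_0$ for the other inclusion. The differences are minor: you move the full amount via the construction of Proposition~\ref{prop:sa} and land at scaled mean $0$, whereas the paper moves only an $\varepsilon$-fraction and stays below $0$, and your quantile coupling supplies a detail the paper merely asserts.

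There is one slip in (i). The claim that an optimizer of \eqref{eq:B} with $D_f>0$ must have zero scaled mean fails when the optimal value is $+\infty$. Your case split is unnecessary anyway: for any optimizer $\nu^*$ of \eqref{eq:B}, the measure $\nu_{\lambda^*}$ built from it is feasible for \eqref{eq:B'} with $D_f(\nu_{\lambda^*}\Vert\mu)\le D_f(\nu^*\Vert\mu)$, which is already the common optimal value.
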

\begin{proof}We first prove (i). Since the feasible set of
\eqref{eq:B'} is contained in that of \eqref{eq:B}, the optimal
value of \eqref{eq:B} is no larger than that of  \eqref{eq:B'}.

Conversely, let $\nu\in\mathcal P$ be feasible for  \eqref{eq:B}. If
$m^\nu=0$, then $\nu$ is also feasible for  \eqref{eq:B'}. Suppose that
$m^\nu<0$. For $t\in(0,1)$, define
\[
    \nu_t:=(1-t)\nu+t\mu.
\]
Then
\[m^{\nu_t}=(1-t)m^\nu+t\,m^\mu.
\]
Since $m^\nu<0$ and $m^\mu>0$, choosing
\[t^* :=\frac{-m^\nu}{-m^\nu+m^\mu}\in(0,1)
\]
gives $m^{\nu_{t^*}}=0$.
Hence $\nu_{t^*}$ is feasible for  \eqref{eq:B'}. By the convexity of the
extended $f$-divergence and the fact that $D_f(\mu \, \Vert \, \mu)=0$,
\[
\begin{aligned}
    D_f(\nu_{t^*} \, \Vert \, \mu)
    &\leq
    (1-t^*)D_f(\nu \, \Vert \, \mu)
    +t^*D_f(\mu \, \Vert \, \mu)\\
    &=
    (1-t^*)D_f(\nu \, \Vert \, \mu)
    \leq
    D_f(\nu \, \Vert \, \mu).
\end{aligned}
\]
Thus, for every feasible measure of  \eqref{eq:B}, there exists a feasible
measure of \eqref{eq:B'} with no larger objective value. It follows that
 \eqref{eq:B} and \eqref{eq:B'} have the same optimal value.

Since every feasible measure for \eqref{eq:B'} is also feasible for
 \eqref{eq:B}, if  \eqref{eq:B'} admits an optimizer,
then so does \eqref{eq:B}.

Conversely, suppose that \eqref{eq:B} admits an optimizer $\nu^*$. If
$m^{\nu^*}=0$, then $\nu^*$ is feasible for  \eqref{eq:B'} and, since the
two problems have the same optimal value, it is also an optimizer of
\eqref{eq:B'}. If $m^{\nu^*}<0$, applying the above construction to
$\nu^*$ yields a measure $\nu_{t^*}$ feasible for \eqref{eq:B'} such that
\[ D_f(\nu_{t^*} \, \Vert \, \mu)\leq D_f(\nu^* \, \Vert \, \mu).
\]
Since the two problems have the same optimal value and $\nu^*$ is optimal for
 \eqref{eq:B}, equality must hold, and $\nu_{t^*}$ is an optimizer of
 \eqref{eq:B'}. Therefore, problem \eqref{eq:B} admits an optimizer if and
only if problem \eqref{eq:B'} does.

Next, we prove (ii). Suppose that $\nu$ is feasible for problem \eqref{eq:C'}. Then, 
we claim that there exists no $\nu'\in\mathcal P$
satisfying $m^{\nu'}\leq0$, $\nu'\succeq\mu$, and $\nu'\succ\nu$. Indeed, since $S$ is strictly increasing,
\[
    \nu'\succ\nu
    \quad\Longrightarrow\quad
    m^{\nu'}>m^\nu=0,
\]
which contradicts $m^{\nu'}\leq0$. Hence $\nu$ is feasible for
problem \eqref{eq:C}.

Conversely, suppose that $\nu$ is feasible for problem \eqref{eq:C}.
We show that necessarily $m^\nu=0$, i.e., $\nu$ is feasible for problem \eqref{eq:C'}. Assume, to the contrary, that
$m^\nu<0$. Since $S(r_0)=0$ and $S$ is strictly increasing, $S(u)\geq0$ for
$u\geq r_0$. Hence $m^\nu<0$ implies $\nu\bigl([0,r_0)\bigr)>0$. Then, for $\varepsilon\in(0,1)$, define
\[ \nu_\varepsilon:=\nu-\varepsilon\,\nu|_{[0,r_0)}+\varepsilon\,\nu\bigl([0,r_0)\bigr)\,\delta_{r_0}.
\]
Thus, $\nu_\varepsilon\in\mathcal P$ is obtained from $\nu$ by moving an
$\varepsilon$-fraction of all mass below $r_0$ to $r_0$. Therefore, $\nu_\varepsilon\succ\nu$, which implies $\nu_\varepsilon\succeq\mu$. Moreover, since $S(r_0)=0$,
\[m^{\nu_\varepsilon}=m^\nu-\varepsilon\int_{[0,r_0)}S(u)\,\nu(\ud u).
\]
Since $\int_{[0,r_0)}S(u)\,\nu(\ud u)<0$,
we have $m^{\nu_\varepsilon}\downarrow m^\nu<0$ as $\varepsilon\downarrow0$. 
Hence, for all sufficiently small $\varepsilon>0$, $m^{\nu_\varepsilon}<0$. Thus $\nu_\varepsilon$ satisfies
$m^{\nu_\varepsilon}\leq0$, $\nu_\varepsilon\succeq\mu$, and $\nu_\varepsilon\succ\nu$,
contradicting the feasibility of $\nu$ for problem \eqref{eq:C}.
Therefore $m^\nu=0$.

The compact-support case (iii) follows by the same argument. 
\end{proof}

\begin{remark}\label{rem:compCS2}
We note that, while we consider the problem in the context of the ruined
diffusion model of Section~\ref{sec:embed}, the results apply equally to the
model without ruin. All the following analysis depends only on the
attainability properties of the diffusion. The attainability conditions for
the models with and without ruin are the same when $S(\ell)>-\infty$, so the
results apply indiscriminately. If $S(\ell)=-\infty$, then, without ruin,
distributions with positive scaled mean $m^\mu$ are also attainable. In this
case, Problem~\eqref{eq:C'} (resp.\ \eqref{eq:C}) is ill-posed, since for
every target distribution we can find a strictly preferable one. Problem~
\eqref{eq:CM'} (resp.\ \eqref{eq:CM}), on the other hand, is trivially solved
by the point mass $\delta_M$.
\end{remark}

To study the existence and uniqueness results, we first introduce the following auxiliary constrained optimization problems, which can be viewed as generalizations of Problem~\eqref{eq:B'} without the restriction on the sign of $m^\mu$.
Define
\[
\mathcal P_\mu:=\left\{\nu\in\mathcal P:\int_{[0,\rho)}S(u)\,\nu(\ud u)=0\right\}.
\]

\noindent\textbf{Problem G.}
Given $\mu\in\mathcal P$, solve
\begin{equation}\label{eq:G}
    \inf_{\nu\in\mathcal P_\mu}D_f(\nu \, \Vert \, \mu).
    \tag{G}
\end{equation}

\begin{theorem}\label{thm:eu}
Assume in each case that the corresponding feasible set contains at
least one measure with finite $f$-divergence. Then the following conclusions hold.
\begin{itemize}
\item[\textup{(i)}]
If $m^\mu>0$, then problem~\eqref{eq:G} admits a minimizer, which further implies problem \eqref{eq:B} admits a minimizer.

\item[\textup{(ii)}]
If $m^\mu<0$ and $f'(\infty)=\infty$, then
problem~\eqref{eq:C} admits a minimizer provided that there exists a
constant $c<\infty$ such that
\begin{equation}\label{eq:int}
\lim_{K\to\infty}
\sup_{\substack{\nu\in\mathcal P_\mu,\nu\succeq\mu\\
D_f(\nu\Vert\mu)\leq c}}
\int_{\{S>K\}}S(u)\,\nu(\ud u)
=0.
\end{equation}
In particular, this condition is automatically satisfied when
$S(\rho)<\infty$.

\item[\textup{(iii)}]
If $m^\mu<0$, then problem~\eqref{eq:CM} admits a minimizer for every
$M\in[r_0,\rho)$ such that $\supp\mu\subseteq[0,M]$.
\end{itemize}

In each case, if $f$ is strictly convex, then the minimizer is unique.
\end{theorem}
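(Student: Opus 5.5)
The plan is the direct method of the calculus of variations: take a minimizing sequence, extract a limit in a suitable weak topology, show that the constraints survive the limit, and use lower semicontinuity of $D_f(\cdot\Vert\mu)$. Let $\nu_n$ be feasible with $D_f(\nu_n\Vert\mu)\downarrow\inf$, decomposed as $\nu_n=\theta_n\mu+\nu_n^\perp$. The divergence is jointly convex and weakly lower semicontinuous as a function of $\nu$. One way to see this is the Donsker--Varadhan/Legendre representation $D_f(\nu\Vert\mu)=\sup_{g}\{\int g\,\ud\nu-\int f^*(g)\,\ud\mu\}$, with $g$ bounded continuous and taking values in $\{f^*<\infty\}$. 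The singular term $f'(\infty)\nu^\perp$ is exactly what this representation produces, so lower semicontinuity holds along weakly convergent sequences of probability measures on $[0,\rho)$. Strict convexity of $f$ gives uniqueness in all three cases. If $\nu_1\neq\nu_2$ are both minimizers, then $\tfrac12(\nu_1+\nu_2)$ is still feasible, because every constraint (zero scaled mean, $\nu\succeq\mu$, support in $[0,M]$) is convex. Its divergence is strictly smaller unless $\theta_1=\theta_2$ $\mu$-a.e. That leaves only the singular parts possibly different, but when $f'(\infty)<\infty$ the singular term is linear, so I expect to need a separate argument there. I would try to show that singular mass of a minimizer is forced to sit at specific points, e.g.\ $0$ in (B) or the supremum of the range, by the optimality conditions.

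\textbf{Case (iii).} This is the easiest. Everything lives on the compact set $[0,M]$, so tightness is automatic and $S$ is bounded and continuous there. By Prokhorov, a subsequence converges weakly to some $\nu^*$ with support in $[0,M]$. The constraint $\int S\,\ud\nu^*=0$ passes to the limit, and so does $F_{\nu^*}\le F_\mu$: CDF inequalities are preserved under weak limits at continuity points, hence everywhere by right-continuity. By Proposition~\ref{prop:equiv}(iii), $\nu^*$ is then feasible for (CM), and lower semicontinuity finishes the proof.

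\textbf{Case (i).} Here $m^\mu>0$ and problem (G) is posed on $[0,\rho)$. Tightness toward $\rho$ is not a concern: the measures below are supported in $[0,\rho)$, and mass escaping toward $\rho$ only raises $\int S\,\ud\nu$. I would compactify to $[0,\rho]$ and argue as follows.
\begin{enumerate}
\item Show that in the limit, mass pushed to $\rho$ can be moved back without increasing the divergence, while keeping the scaled mean at most $0$.
\item Since $S\ge S(0)>-\infty$ on the support, Fatou gives $\int S\,\ud\nu^*\le\liminf\int S\,\ud\nu_n=0$ for the lower semicontinuous extension of $S$ to $[0,\rho]$. So $\nu^*$ (with any mass at $\rho$ removed) is feasible for (B).
\item Use the mixing argument of Proposition~\ref{prop:equiv}(i) to restore equality in the mean constraint.
\end{enumerate}
Throughout, mass at $\rho$ should be treated as singular and cost $f'(\infty)$ per unit, which is consistent with the lower semicontinuity estimate.

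\textbf{Case (ii).} I expect this to be the main obstacle. Tightness at $\rho$ now matters, because the constraint $\nu\succeq\mu$ pushes mass upward. The assumption $f'(\infty)=\infty$ forces $\nu_n\ll\mu$, and the uniform bound on $\int f(\theta_n)\,\ud\mu$ together with superlinearity of $f$ gives uniform integrability of $(\theta_n)$ by de la Vallée-Poussin. Dunford--Pettis then yields a weakly $L^1(\mu)$-convergent subsequence $\theta_n\to\theta^*$. Under this convergence, the functional $\int f(\theta)\,\ud\mu$ is lower semicontinuous by convexity, and the CDF constraint passes to the limit since it is tested against indicators $\ind_{[0,x]}\in L^\infty$. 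The delicate step is passing the constraint $\int S\theta_n\,\ud\mu=0$ to the limit when $S$ is unbounded. This is precisely what condition \eqref{eq:int} provides. I would truncate, writing $S=S\wedge K+(S-K)^+$. The bounded part converges by weak $L^1$ convergence, and the tail is uniformly small by \eqref{eq:int}. The lower tail is harmless because $S\ge S(0)$. When $S(\rho)<\infty$, $S$ is bounded, so the tail vanishes for $K\ge S(\rho)$. Feasibility for (C) then follows from Proposition~\ref{prop:equiv}(ii).
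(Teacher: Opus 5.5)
Your proposal is correct and follows the paper's proof essentially step for step. In (iii) you use compactness on $[0,M]$ and closedness of $\succeq$ under weak limits. In (ii) you use de la Vall\'ee-Poussin and Dunford--Pettis, the truncation $S=S\wedge K+(S-K)^+$ controlled by \eqref{eq:int}, and indicator test functions. In (i) you compactify, move the mass at $\rho$ back and mix with $\mu$. For uniqueness you use averaging plus concentration of the singular mass at the endpoint, which is the move-then-mix argument of Lemma~\ref{lem:0}.

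The differences are minor. You justify lower semicontinuity via the variational representation, where the paper simply invokes it. You also treat $S(\rho)=\infty$ in (i) by the same compactification, since the lower semicontinuous extension forces $\bar\nu(\{\rho\})=0$, instead of the paper's separate Markov-inequality tightness argument. One small step should be made explicit, as the paper does: in (ii) the minimizing sequence can be taken inside the $c$-sublevel set where \eqref{eq:int} applies. If the infimum equals $c$, any element of that set is already a minimizer.
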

\begin{proof} We first prove \textup{(i)}. Let $(\nu_n)_{n\geq1}$ be a minimizing sequence in $\mathcal P_\mu$. Since the feasible set contains at least one measure with finite $f$-divergence, we may assume that $D_f(\nu_n \, \Vert \, \mu)\leq c$ for some $c<\infty$ independent of $n$. We consider two cases. 

\medskip \noindent\emph{Case 1: $S(\rho)=\infty$.} 
Define $A(x):=S(x)-S(0)$. 
Then $A(x)\geq0$ and $A(x)\to\infty$ as $x\to\rho$. Since each $\nu_n$ is feasible, 
\[ \int_{[0,\rho)}A(u)\,\nu_n(\ud u)=-S(0). 
\] 
Therefore, for every $K>0$, 
\[ \nu_n\bigl(\{A>K\}\bigr) \leq \frac{1}{K}\int_{\{A>K\}}A\,\ud\nu_n \leq \frac{-S(0)}{K}. 
\] 
Hence $(\nu_n)$ is tight. Passing to a subsequence, we may assume that $\nu_n\Rightarrow\nu$. 
By lower semicontinuity, 
\[ D_f(\nu \, \Vert \, \mu) \leq \liminf_{n\to\infty}D_f(\nu_n \, \Vert \, \mu). 
\] 
Moreover, the Portmanteau theorem gives 
\[\int_{[0,\rho)}S(u)\,\nu(\ud u) = S(0)+\int_{[0,\rho)}A(u)\,\nu(\ud u)\leq S(0)+\liminf_{n\to\infty} \int_{[0,\rho)}A(u)\,\nu_n(\ud u) =0. 
\]

 \medskip 
 \noindent\emph{Case 2: $S(\rho)<\infty$.} Extend $S$ continuously to the compactified state space $[0,\rho]$ by setting $S(\rho):=\lim_{x\uparrow\rho}S(x)$. After passing to a subsequence, we may assume that $\nu_n$ converges weakly to a probability measure $\bar\nu$ on $[0,\rho]$. We also regard $\mu$ as a probability measure on $[0,\rho]$ by setting
$\mu\bigl(\{\rho\}\bigr)=0$, and use the same definition of the extended
$f$-divergence on this compactified space. By lower semicontinuity, 
\[ D_f(\bar\nu \, \Vert \, \mu) \leq \liminf_{n\to\infty}D_f(\nu_n \, \Vert \, \mu). 
\] 
Since $S$ is bounded and continuous on $[0,\rho]$, 
\[ \int_{[0,\rho]}S(u)\,\bar\nu(\ud u)=0. 
\] 
Let $d:=\bar\nu\bigl(\{\rho\}\bigr)$. If $f'(\infty)=\infty$, the finiteness of $D_f(\bar\nu \, \Vert \, \mu)$ implies that $d=0$. If $f'(\infty)<\infty$, following Lemma \ref{lem:0}, move the endpoint mass to $0$ by defining 
\[ \widehat\nu := \bar\nu|_{[0,\rho)}+d\delta_0. 
\] 
We claim that
\[D_f(\widehat\nu \, \Vert \, \mu)\leq D_f(\bar\nu \, \Vert \, \mu).
\]
Indeed, if $\mu\bigl(\{0\}\bigr)=0$, then both $d\delta_\rho$ and
$d\delta_0$ are singular with respect to $\mu$. Hence $D_f(\widehat\nu \, \Vert \, \mu)=D_f(\bar\nu \, \Vert \, \mu)$.

If instead $p_0:=\mu\bigl(\{0\}\bigr)>0$, write $\bar\nu^{ac}=\bar\theta\mu$.
After moving $d\delta_\rho$ to $0$, the density at $0$ changes from
$\bar\theta(0)$ to
\[
    \bar\theta(0)+\frac{d}{p_0}.
\]
Since $d\delta_\rho$ contributes the singular cost $\alpha d$ to
$D_f(\bar\nu \, \Vert \, \mu)$, we obtain, with $\alpha= f'(\infty)$,
\[ D_f(\widehat\nu \, \Vert \, \mu)-D_f(\bar\nu \, \Vert \, \mu)=p_0\left[f\left(\bar\theta(0)+\frac{d}{p_0}\right)-f\bigl(\bar\theta(0)\bigr)\right]
 -\alpha d\leq 0,
\]
due to the fact that a convex function with finite recession constant $\alpha$ satisfies
\[ f(x+h)-f(x)\leq \alpha h,\qquad x,h\geq0.
\]

Moreover, since $S(0)\leq S(\rho)$,
\[m^{\widehat\nu} =m^{\bar\nu}-dS(\rho)+dS(0)\leq0.
\]
Thus, in either case, we obtain a probability measure $\nu$ on $[0,\rho)$ such that $m^\nu\leq0$ and $D_f(\nu \, \Vert \, \mu) \leq \liminf_{n\to\infty}D_f(\nu_n \, \Vert \, \mu)$. 

If $m^\nu=0$, then $\nu$ is already a minimizer. Otherwise, $m^\nu<0$, and since $m^\mu>0$, define 
\[ \nu^*:=(1-t)\nu+t\mu, \qquad t:=\frac{-m^\nu}{m^\mu-m^\nu}\in(0,1). 
\] Then $m^{\nu^*}=0$, and hence $\nu^*\in\mathcal P_\mu$. By convexity, 
\[ D_f(\nu^* \, \Vert \, \mu) \leq (1-t)D_f(\nu \, \Vert \, \mu)+tD_f(\mu \, \Vert \, \mu) \leq D_f(\nu \, \Vert \, \mu). 
\] 
Therefore, $\nu^*$ is a minimizer of problem~\eqref{eq:G}, and hence also of problem \eqref{eq:B} by Proposition \ref{prop:equiv}\textup{(i)}.

\bigskip 
We next prove \textup{(ii)}. Since $f'(\infty)=\infty$, every measure
with finite $f$-divergence is absolutely continuous with respect to $\mu$. 

By assumption, the $c$-sublevel set $\left\{\nu\in\mathcal P_\mu:\nu\succeq\mu ,\ D_f(\nu\Vert\mu)\leq c\right\}$
is nonempty, and hence $\inf_{\nu\in\mathcal P_\mu,\nu\succeq \mu}D_f(\nu\Vert\mu)\leq c$. If the equality holds, any measure in this
sublevel set is already a minimizer. Thus, it remains only to consider
the case $\inf_{\nu\in\mathcal P_\mu,\nu\succeq \mu}D_f(\nu\Vert\mu)< c$. In this case, we may choose a minimizing sequence
$(\nu_n)$ for problem~\eqref{eq:C'} such that, after discarding finitely
many terms, $D_f(\nu_n\Vert\mu)\leq c$ for all $n$.
Since $f'(\infty)=+\infty$, each $\nu_n$ is absolutely continuous with
respect to $\mu$; write
\[
    \nu_n(\ud u)=\theta_n(u)\mu(\ud u).
\]
Therefore,
\[ \int_{[0,\rho)}\theta_n\,\ud\mu=1,
    \qquad
    \int_{[0,\rho)}S(u)\theta_n(u)\,\mu(\ud u)=0,
    \qquad
    \int_{[0,\rho)}f(\theta_n)\,\ud\mu= D_f(\nu_n\Vert\mu) \leq c.
\]
The de la Vall\'ee--Poussin criterion implies
that $(\theta_n)$ is uniformly integrable. By the Dunford--Pettis theorem,
after passing to a subsequence, there exists $\theta\in L^1_+(\mu)$ such
that $\theta_n\to\theta$ weakly in $L^1$. In particular,
\[
    \int_{[0,\rho)}\theta\ud\mu=\lim_{n\to \infty}\int_{[0,\rho)}\theta_n\ud \mu=1.
\]
Define $\nu(\ud u):=\theta(u)\mu(\ud u)$. By lower semicontinuity, $D_f(\nu\Vert\mu)\leq\liminf_{n\to\infty}D_f(\nu_n\Vert\mu)$.

We next verify the scaled-mean constraint. For $K>0$, let
\[S_K:=S\wedge K.
\]
Since $S_K\in L^\infty(\mu)$,
\[\int S_K\theta\,\ud\mu=\lim_{n\to\infty}\int S_K\theta_n\,\ud\mu.
\]
Using $S=S_K+(S-K)_+$ and $m^{\nu_n}=0$, we obtain
\[\int S_K\theta_n\,\ud\mu=-\int(S-K)_+\theta_n\,\ud\mu.
\]
Consequently,
\[\left|\int S_K\theta\,\ud\mu\right| \leq\limsup_{n\to\infty}\int_{\{S>K\}}S(u)\theta_n(u)\,\mu(\ud u).
\]
By \eqref{eq:int}, the right-hand side tends to zero as
$K\to\infty$. Since $S_K-S(0)\uparrow S-S(0)$, the monotone convergence theorem yields
\[\int_{[0,\rho)}S(u)\theta(u)\,\mu(\ud u)=0.
\]

It remains to verify the first-order stochastic dominance constraint.
Since $\nu_n\succeq\mu$, for every $x\in[0,\rho)$,
\[\int_{[0,x]}\theta_n(u)\,\mu(\ud u)\leq\mu([0,x]).
\]
Since $\ind_{[0,x]}\in L^\infty(\mu)$, weak $L^1$ convergence gives
\[\int_{[0,x]}\theta(u)\,\mu(\ud u)=\lim_{n\to\infty}\int_{[0,x]}\theta_n(u)\,\mu(\ud u)\leq\mu([0,x]).
\]
Hence $\nu\succeq\mu$.
Therefore, $\nu$ is a minimizer of problem~\eqref{eq:C'}, and hence also of problem \eqref{eq:C} by Proposition \ref{prop:equiv}\textup{(ii)}.

\bigskip Finally, we prove \textup{(iii)}. Let $(\nu_n)$ be a minimizing sequence
for problem~\eqref{eq:CM'}. Since all $\nu_n$ are supported on the compact
interval $[0,M]$, after passing to a subsequence, $\nu_n\Rightarrow\nu$ for some probability measure $\nu$ supported on $[0,M]$.
Since $S$ is bounded and continuous on $[0,M]$,
\[\int_{[0,M]}S(u)\,\nu(\ud u)=\lim_{n\to\infty}\int_{[0,M]}S(u)\,\nu_n(\ud u)=0.
\]
Moreover, since first-order stochastic dominance is closed under weak
convergence, $\nu\succeq\mu$. Finally, lower semicontinuity gives $D_f(\nu\Vert\mu)\leq\liminf_{n\to\infty}D_f(\nu_n\Vert\mu)$.
Therefore, $\nu$ is a minimizer of problem~\eqref{eq:CM'}, and hence also of problem \eqref{eq:CM} by Proposition \ref{prop:equiv}\textup{(iii)}. Note that this argument does not depend on whether $f'(\infty)$ is finite or infinite.

\bigskip It remains to prove uniqueness. If $f'(\infty)=\infty$, every minimizer with finite divergence is absolutely continuous with respect to $\mu$. Thus, if $\ud \nu_i=\theta_i \,\ud \mu$, $i=1,2$, are two distinct minimizers, their average is feasible and strict convexity of $f$ gives 
\[ D_f\Bigl(\frac{\nu_1+\nu_2}{2} \, \Big\Vert \, \mu\Bigr) < \frac12D_f(\nu_1 \, \Vert \, \mu) + \frac12D_f(\nu_2 \, \Vert \, \mu), 
\] contradicting minimality. 
Under strict convexity, if a \eqref{eq:B}-optimizer had $m^\mu<0$, mixing it with $\mu$ to reach zero mean would strictly reduce the divergence. Hence every \eqref{eq:B}-optimizer has zero mean and coincides with the unique \eqref{eq:G}-optimizer.

If $f'(\infty)<\infty$, Lemma \ref{lem:0} shows that a minimizer must have its singular mass at the relevant endpoint, namely $0$ in \textup{(i)} and $M$ in \textup{(iii)}. Hence it can be written, respectively, as 
\[ 
\ud \nu_i=a_i\, \ud\delta_0+\theta_i \, \ud \mu \qquad\text{or}\qquad \ud \nu_i=a_i \, \ud\delta_M+\theta_i\, \ud \mu, 
\] with the endpoint mass absorbed into the density when the corresponding endpoint has positive $\mu$-mass. If $\theta_1\neq\theta_2$ on a set of positive $\mu$-measure, strict convexity again gives a strict improvement for their average, contradicting minimality. Therefore, $\theta_1=\theta_2$ $\mu$-a.e., which also implies $a_1=a_2$.
Hence $\nu_1=\nu_2$. Thus the minimizer is unique. 
\end{proof}

With the main affirmative
results for problems \eqref{eq:B}, \eqref{eq:C}, and \eqref{eq:CM}, we then aim to
determine the minimizers analytically using the calculus of variations.
To do so, we distinguish two main cases: the superlinear case
$f'(\infty)=\infty$, in which any minimizer with finite divergence is
necessarily absolutely continuous with respect to $\mu$, and the
non-superlinear case $f'(\infty)<\infty$, in which a singular part may occur. Before moving on, we first present a useful technical lemma that will be applied in the following subsections. The proof is relegated to Appendix A.
\begin{lemma}\label{lem:comonotonic}
Let $\mu$ be a probability measure on $[0,\rho)$, $S\in L^1$ be nondecreasing and $0\leq a\leq b\leq\infty$.
Suppose that $ \theta_0:\ [0,\rho)\to[0,\infty]$ satisfies
\[
    a\le \theta_0\le b,\qquad a\leq\int_{[0,\rho)} \theta_0\,\ud\mu=C\leq 1\quad\text{and}\quad S\theta_0\in L^1.
\]
Then there exists a non-decreasing function $h$ such that
\begin{align*}
& a\le h\le b, \qquad \int_{[0,\rho)} h\,\ud\mu=C,\quad\text{and}\\
& \int_{[0,\rho)} S(u)\theta_0(u)\,\mu(\ud u) \leq \int_{[0,\rho)} S(u)h(u)\,\mu(\ud u) <\infty.
\end{align*}
\end{lemma}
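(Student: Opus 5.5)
The plan is a monotone-rearrangement argument. We replace $\theta_0$ by an increasing function with the same distribution under $\mu$; by a Hardy--Littlewood-type inequality, pairing a nondecreasing $S$ with a comonotone density can only increase $\int S\theta\,\ud\mu$.

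\textbf{Construction of $h$.} Let $F=F_\mu$ and $G(t):=\mu(\theta_0\le t)$ be the distribution function of $\theta_0$ under $\mu$, with quantile function $G^{-1}$ on $(0,1)$. If $\mu$ had no atoms, $U:=F(u)$ would be uniform under $\mu$, and we would simply set $h:=G^{-1}\circ F$. This $h$ is nondecreasing, takes values in $[a,b]$ because $\theta_0$ does, and has the same law as $\theta_0$ under $\mu$, so $\int h\,\ud\mu=C$.

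Atoms of $\mu$ prevent $F$ from being uniform. We handle them by averaging: on each atom $\{x\}$ with $\mu(\{x\})>0$, put
\[
h(x):=\frac{1}{F(x)-F(x-)}\int_{F(x-)}^{F(x)}G^{-1}(s)\,\ud s .
\]
Equivalently,
\[
h(u)=\mathbb{E}\bigl[G^{-1}(V)\mid u\bigr],
\]
where $V=F(u-)+W\bigl(F(u)-F(u-)\bigr)$ and $W$ is an independent uniform variable (the usual randomized distributional transform). Then $V$ is uniform on $(0,1)$. Averaging preserves the bounds $a\le h\le b$, preserves monotonicity since the intervals $[F(x-),F(x)]$ are ordered, and gives
\[
\int h\,\ud\mu=\int_0^1 G^{-1}(s)\,\ud s=C .
\]

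\textbf{Comparison of $S$-moments.} On the enlarged space carrying $(u,W)$, the pair $(u,V)$ is comonotone, and so is the pair $\bigl(S(u),G^{-1}(V)\bigr)$. The pair $(u,\theta_0(u))$ is some other coupling with the same marginals. The classical Hardy--Littlewood / rearrangement inequality, $\mathbb{E}[XY]\le\mathbb{E}[X^*Y^*]$ for comonotone rearrangements, then yields
\[
\int S\theta_0\,\ud\mu\le \mathbb{E}\bigl[S(u)\,G^{-1}(V)\bigr]=\int S h\,\ud\mu ,
\]
where the last equality follows by conditioning on $u$, since $S(u)$ is $u$-measurable.

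\textbf{Integrability (main obstacle).} $S$ may be unbounded in both directions: $S(0)$ can be very negative, and $S$ may tend to $\infty$ as $u\to\rho$. The product $S\theta$ need not be integrable, and $b$ may equal $\infty$. We handle this by splitting $S=S^+-S^-$, where $S^+=S\vee 0$ and $S^-=(-S)\vee 0$; since $S$ is nondecreasing, $S^+$ is nondecreasing and $S^-$ is nonincreasing.

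\emph{Lower bound.} Apply the rearrangement inequality to the bounded, truncated versions $S^+\wedge K$ and $\theta_0\wedge K$, which have the same comonotone structure, and let $K\to\infty$ by monotone convergence. This gives $\int S^+h\,\ud\mu\ge\int S^+\theta_0\,\ud\mu$. Applying the anti-rearrangement inequality to the nonincreasing $S^-$ gives $\int S^- h\,\ud\mu\le\int S^-\theta_0\,\ud\mu<\infty$. Together these give the inequality above and show that the negative part of $Sh$ is integrable.

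\emph{Finiteness.} It remains to show $\int S^+h\,\ud\mu<\infty$. This is the delicate point: the comonotone rearrangement can push large values of $\theta_0$ onto the region where $S$ is large. It cannot be fixed merely by capping at $\theta_0$, because $S\theta_0\in L^1$ does not prevent the Hardy--Littlewood bound from exceeding $\int S\theta_0\,\ud\mu$. I would proceed in two steps:

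\begin{enumerate}
\item First try to bound $\int S^+h\,\ud\mu$ using the fact that $h\le b$ together with $S\in L^1(\mu)$. This immediately gives finiteness when $b<\infty$.
\item When $b=\infty$, modify the construction. Rather than the full rearrangement, use a capped monotone function $h_K=\min\bigl(G^{-1}\circ V,K\bigr)$ plus a compensating constant on $\{S\le 0\}$, or more simply pick the smallest truncation level at which the $S$-moment already reaches $\int S\theta_0\,\ud\mu$. Then use an intermediate-value argument in the cap level, keeping the total mass equal to $C$ by adjusting with mass at the left, to obtain a finite value that is still at least $\int S\theta_0\,\ud\mu$.
\end{enumerate}

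I expect this finiteness step to be the step requiring the most care.
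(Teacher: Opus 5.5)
\textbf{The gap is the finiteness step for $b=\infty$: you never actually produce a valid $h$ there.} The rest of your route is sound. The randomized distributional transform handles the atoms of $\mu$. The split $S=S^+-S^-$, with the comonotone bound for $S^+$ and the countermonotone bound for $S^-$, correctly gives $\int S\theta_0\,\ud\mu\le\int Sh\,\ud\mu$ in $(-\infty,\infty]$, together with $\int S^-h\,\ud\mu<\infty$. For $b<\infty$, finiteness follows from $h\le b$ and $S\in L^1$, so that case is complete.

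For $b=\infty$, as you note yourself, the full rearrangement can give $\int S^+h\,\ud\mu=\infty$. The repairs you sketch do not work as stated. Since $S$ is nondecreasing, $\{S\le 0\}$ is an initial segment ending at some $x_0$. Adding a positive compensating constant there (or ``adjusting with mass at the left'') to the nondecreasing function $\min(h,K)$ creates a downward jump at $x_0$ whenever $\min(h,K)$ rises by less than that constant across $x_0$, so monotonicity is lost. The alternative, ``pick the smallest truncation level and use an intermediate-value argument in the cap level'', is not specified: truncation alone loses mass $C-\int\min(h,K)\,\ud\mu$, and you do not say how to restore it while keeping $h$ nondecreasing, nor what map the intermediate-value argument is applied to.

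The missing step is short once you use that $b=\infty$ and that $\mu$ is a probability measure. Put $h_K:=\min(h,K)$, $C_K:=\int h_K\,\ud\mu\uparrow C$, and $\tilde h_K:=h_K+(C-C_K)$. Then:
\begin{itemize}
\item $\tilde h_K$ is nondecreasing;
\item $\tilde h_K\ge a$ for $K\ge a$, and trivially $\tilde h_K\le b=\infty$;
\item $\int\tilde h_K\,\ud\mu=C$;
\item its $S$-moment is
\[
\int S\tilde h_K\,\ud\mu=\int S^+h_K\,\ud\mu-\int S^-h_K\,\ud\mu+(C-C_K)\int S\,\ud\mu .
\]
\end{itemize}
This is finite because $\tilde h_K$ is bounded. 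By monotone convergence in both terms, and since $\int S^-h\,\ud\mu<\infty$, it tends to $\int Sh\,\ud\mu$ as $K\to\infty$. So take $h$ itself if $\int Sh\,\ud\mu<\infty$, and otherwise take $\tilde h_K$ with $K$ large.

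The paper avoids the issue by never rearranging. It maximizes the linear functional $\theta\mapsto\int S\theta\,\ud\mu$ over the box-and-mass constraints with a two-valued step function. For $b<\infty$ this is the bang-bang choice $a+(b-a)\bigl(\ind_{\{u>c\}}+\gamma\ind_{\{u=c\}}\bigr)$. For $b=\infty$ and $C>a$, let $L$ be the $S$-mean of the probability measure $\frac{\theta_0-a}{C-a}\,\mu$. The paper then takes $a+\frac{C-a}{\mu(\{S\ge L\})}\ind_{\{S\ge L\}}$, which is bounded, so finiteness is automatic. Your rearrangement additionally preserves the law of $\theta_0$ when $\mu$ is atomless, but the lemma does not need this, and the truncation fix gives it up anyway.
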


\subsection{Optimal measure for superlinear \texorpdfstring{$f$}{f}}

In this subsection, we discuss the solutions to both problems \eqref{eq:B'} and \eqref{eq:C'} with a superlinear generator $f$, i.e., $f'(\infty)=\infty$. In this case, any measure with a nonzero singular part has infinite divergence. Therefore, we assume that $\nu$ is absolutely continuous with respect to $\mu$, denoted by $\nu\ll\mu$ with $\theta=\frac{\ud \nu}{\ud \mu}\in L^1$, and then,
\[
	D_f(\nu \, \Vert \, \mu)=\int_{[0,\rho)} f\left(\frac{\ud\nu}{\ud\mu}\right)\ud\mu.
\]

To move from the general existence and uniqueness results given in
Theorem \ref{thm:eu} to analytical solutions for specific choices of
$f$ and $S$, we use the method of Lagrange multipliers. Since $f$ need not
be differentiable, we formulate the first-order condition in terms of its
ordinary subdifferential. Recall that we extend $f$ by setting $f(x)=\infty$ for $x<0$. Then, for $x\geq0$, we define its subdifferential by
\[\partial f(x):=\left\{z\in\mathbb R:f(y)\geq f(x)+z(y-x)\ \text{for all }y\in\mathbb R\right\},
\qquad \text{if } f(x)<\infty,
\]
and $\partial f(x):=\emptyset$ if $f(x)=\infty$. We denote by $L^1_+$ the cone of nonnegative functions in $L^1$.
\begin{prop}\label{prop:mainthm2}
	Assume $\theta\in L^1_+$, $S\theta\in L^1$  and constants $\lambda, \eta\in \mathbb R$ satisfy, for $\mu$-almost every $u$,
	\begin{align}
	\label{fdiv1}
& -\lambda -\eta S(u) \in \partial f\bigl(\theta(u)\bigr)\ \ ; \ \ \int_{[0,\rho)}\theta(u) \, \mu(\ud u)=1;\ \ \int_{[0,\rho)} S(u)\theta(u) \, \mu(\ud u)= 0,
	\end{align}
and define the measure $\nu(\ud x)=\theta(x)\mu(\ud x)$. Then $\nu$ solves Problem~\eqref{eq:G}. Moreover,
\begin{itemize}
\item[\textup{(i)}] If $m^\mu>0$, the problem \eqref{eq:G} has at least one solution $\hat{\nu}$ such that $\mu\succeq \hat{\nu}$.
\item[\textup{(ii)}] If $m^\mu<0$, the problem \eqref{eq:G} has at least one solution $\hat{\nu}$ such that $\hat{\nu}\succeq\mu$.
\end{itemize}
\end{prop}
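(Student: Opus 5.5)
The proof splits into a sufficiency part and a monotone-rearrangement part.

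\emph{Step 1 (sufficiency of the first-order condition).} Let $\nu'\in\mathcal P_\mu$ with $D_f(\nu'\Vert\mu)<\infty$. Because $f'(\infty)=\infty$, we have $\nu'\ll\mu$, say $\nu'=\theta'\mu$. For $\mu$-a.e.\ $u$ the subgradient inequality gives
\[
f(\theta'(u))\geq f(\theta(u))-\bigl(\lambda+\eta S(u)\bigr)\bigl(\theta'(u)-\theta(u)\bigr).
\]
We integrate against $\mu$. All terms on the right are integrable, since $\theta,\theta',S\theta,S\theta'\in L^1$; and $f(\theta)\in L^1$ follows from $f\geq0$ together with this inequality evaluated at $\theta'=\theta$ or at a finite competitor. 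The constraints $\int\theta'\,\ud\mu=\int\theta\,\ud\mu=1$ and $\int S\theta'\,\ud\mu=\int S\theta\,\ud\mu=0$ kill the linear terms, so
\[
D_f(\nu'\Vert\mu)\geq D_f(\nu\Vert\mu).
\]
Competitors with infinite divergence are trivially no better. Hence $\nu$ solves \eqref{eq:G}.

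\emph{Step 2 (monotone solutions).} From the given solution I extract a monotone one. Since $\partial f$ is a monotone set-valued map, the condition $-\lambda-\eta S(u)\in\partial f(\theta(u))$ with $S$ strictly increasing forces $u\mapsto\theta(u)$ to be monotone. More precisely, if $\eta<0$ then $-\eta S$ is increasing, and we can choose $\theta$ nondecreasing: whenever $\theta(u)<\theta(v)$ one has $g_u\leq g_v$ for subgradients, so ties can be resolved monotonically. Similarly, $\eta>0$ gives a nonincreasing choice.

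The difficulty is that the multiplier is only given abstractly, so I would instead use a rearrangement argument built on Lemma~\ref{lem:comonotonic}. Take the solution $\theta$ and replace it by its increasing (resp.\ decreasing) rearrangement $\theta^\uparrow$ with respect to $\mu$. This rearrangement is equimeasurable with $\theta$, so $\int f(\theta^\uparrow)\,\ud\mu=\int f(\theta)\,\ud\mu$ and $\int\theta^\uparrow\,\ud\mu=1$. By the Hardy--Littlewood inequality, $\int S\theta^\uparrow\,\ud\mu\geq 0$ and $\int S\theta^\downarrow\,\ud\mu\leq 0$. With atoms in $\mu$ the rearrangement needs care, which is exactly what Lemma~\ref{lem:comonotonic} packages, together with convexity (Jensen on atoms/level sets) to keep the divergence from increasing.

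\emph{Case (ii), $m^\mu<0$.} Take $\hat\theta$ nondecreasing with $\int S\hat\theta\,\ud\mu\geq0$. If the value is strictly positive, mix with $\mu$ (which has $m^\mu<0$): $\hat\nu_t=(1-t)\hat\nu+t\mu$. By convexity and $D_f(\mu\Vert\mu)=0$ the divergence does not increase, the density $(1-t)\hat\theta+t$ stays nondecreasing, and $t$ can be chosen so the scaled mean is $0$. The mixture therefore remains optimal. A nondecreasing density with total mass $1$ satisfies $\hat\nu\succeq\mu$, because $F_{\hat\nu}(x)=\int_{[0,x]}\hat\theta\,\ud\mu\leq\mu([0,x])$: the average of a nondecreasing function over a lower set is at most its overall average $1$ (a Chebyshev-type covariance inequality).

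\emph{Case (i), $m^\mu>0$.} This is symmetric: use the nonincreasing rearrangement, mix with $\mu$ to restore zero mean, and conclude $\mu\succeq\hat\nu$.

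The main obstacle is making the rearrangement rigorous for a general $\mu$ with atoms while preserving both $\int f(\theta)\,\ud\mu$ (or at least not increasing it) and the correct inequality for $\int S\theta\,\ud\mu$. I would handle this by applying Lemma~\ref{lem:comonotonic} with $a=0$, $b=\infty$, $C=1$, and controlling the divergence through Jensen's inequality on level sets of the constructed $h$.
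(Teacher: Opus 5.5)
Your Step~1 is correct and matches the paper's sufficiency argument. The gap is in Step~2, exactly where you locate the main obstacle. Lemma~\ref{lem:comonotonic} does not rearrange $\theta$; it replaces it by a bang-bang density. With $a=0$, $b=\infty$, $C=1$ that density carries no information about $\theta$ at all. For any feasible $\theta$ the threshold quantity is $L=\int S\theta\,\ud\mu=0$, so $c=r_0$, and the lemma returns $h=\ind_{[r_0,\rho)}/\mu([r_0,\rho))$. This is the same density for every $\theta$ and every $f$. Because $h$ is not an average of $\theta$ over its level sets (it vanishes on $[0,r_0)$), Jensen gives no bound on $\int f(h)\,\ud\mu$. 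The measure you then obtain by mixing with $\mu$ is independent of $f$, so in general it is not a minimizer. For example, take $S(x)=x-2$, $\mu=\frac12\delta_1+\frac14\delta_2+\frac14\delta_3$ and $f(x)=x\ln x$. Your construction returns the uniform law on $\{1,2,3\}$, with divergence $\frac13\ln\frac{32}{27}\approx0.057$. The optimum (the exponential tilt of $\mu$) is $-\ln\bigl(\frac14+\frac1{\sqrt2}\bigr)\approx0.044$. A second, smaller hole: Hardy--Littlewood only gives $\int S\theta^\uparrow\,\ud\mu\in[0,\infty]$. If this equals $+\infty$, no mixture with $\mu$ has zero scaled mean.

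What you are missing is that the multipliers are part of the hypothesis, not abstract, and they do the work. By monotonicity of $\partial f$, for $u<v$ one has $-\eta\bigl(S(v)-S(u)\bigr)\bigl(\theta(v)-\theta(u)\bigr)\geq0$. In case (ii), $\eta>0$ would make $\theta$ nonincreasing, and Chebyshev's inequality would give $0=\int S\theta\,\ud\mu\leq m^\mu<0$. So either $\eta<0$, in which case $\theta$ itself is nondecreasing and $\nu\succeq\mu$ already, or $\eta=0$. In the latter case $\theta(u)\in I_\lambda=\{x\geq0:-\lambda\in\partial f(x)\}=[a,b]$, the argmin set of $f(x)+\lambda x$. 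This set is bounded by superlinearity and satisfies $a\leq1\leq b$. This is the interval on which Lemma~\ref{lem:comonotonic} must be applied. It yields a nondecreasing $h$ with values in $[a,b]$, $\int h\,\ud\mu=1$ and $0\leq\int Sh\,\ud\mu<\infty$. The mixture $\widehat\theta=(1-t)+th$ still takes values in $I_\lambda$, so it satisfies the first-order condition with the same $\lambda$ and $\eta=0$, and it is optimal by your Step~1. No Jensen is needed, because $f$ is affine on $I_\lambda$; case (i) is symmetric. If you prefer to keep a rearrangement proof, you need a genuine atom-splitting rearrangement: average the quantile function of $\theta$ over $[F_\mu(u-),F_\mu(u)]$ and use conditional Jensen. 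You would still need the case analysis above to guarantee that the rearranged scaled mean is finite.
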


\begin{proof}
We aim to minimize the functional $F(\theta):=\int_{[0,\rho)} f\bigl(\theta(u)\bigr)\mu(\ud u)$ over all $\theta\in L^1_+$ satisfying
the constraints $\int_{[0,\rho)}\theta(u)\mu(\ud u)=1$ and $\int_{[0,\rho)}S(u)\theta(u)\mu(\ud u)=0$.
If indeed $\theta$ satisfies the constraints and
$\lambda,\eta\in\mathbb R$ such that
\[-\lambda-\eta S(u)\in\partial f\bigl(\theta(u)\bigr)\qquad\text{for $\mu$-almost every }u.
\]
Then, by the pointwise convexity,
\[f\bigl(\theta'(u)\bigr)-f\bigl(\theta(u)\bigr) \geq\bigl(-\lambda-\eta S(u)\bigr)\bigl(\theta'(u)-\theta(u)\bigr)
\]
for every $\theta'$ that satisfies the constraints. Thus, by integration,
\begin{align*}
    F(\theta')-F(\theta)&\ge \int_{[0,\rho)}\bigl(-\lambda-\eta S(u)\bigr)\bigl(\theta'(u)-\theta(u)\bigr)\mu(\ud u) \\
    &=-\lambda \int_{[0,\rho)}\bigl(\theta'(u)-\theta(u)\bigr)\mu(\ud u)-\eta \int_{[0,\rho)} S(u)\bigl(\theta'(u)-\theta(u)\bigr)\mu(\ud u)=0,
\end{align*}
where the last equality follows from the two constraints in \eqref{fdiv1}. Therefore, $\theta$ is a minimizer.

We next prove the first-order stochastic dominance statements. Note that the subdifferential of a convex function is monotone, i.e., if $y_i\in\partial f(x_i)$, $i=1,2$, then $(y_1-y_2)(x_1-x_2)\geq0$. Hence, for $u<v$, 
\[\Big(-\lambda-\eta S(v)-\left(-\lambda-\eta S(u)\right)\Big) \bigl(\theta(v)-\theta(u)\bigr) \ge 0,
\]
which implies $-\eta\bigl(S(v)-S(u)\bigr) \bigl(\theta(v)-\theta(u)\bigr) \geq0$. Since $S$ is strictly increasing, $\theta$ is non-increasing when $\eta>0$ and non-decreasing when $\eta<0$.

We first complete the proof of (ii) for $m^\mu<0$. If $\eta>0$, then $\theta$ is non-increasing as a function of $u$. Since $S$ is increasing, Chebyshev's inequality (see \cite[Theorem 43]{HLP52}) for oppositely
ordered functions gives
\[
    \int_{[0,\rho)} S(u)\theta(u)\,\mu(\ud u)
    \leq
    \left(\int_{[0,\rho)} S(u)\,\mu(\ud u)\right)
    \left(\int_{[0,\rho)} \theta(u)\,\mu(\ud u)\right)
    =
    m^\mu<0,
\]
which contradicts $\int_{[0,\rho)} S(u)\theta(u)\,\mu(\ud u)=0$. Thus $\eta \leq 0$.

In the case $\eta < 0$, it follows from the monotonicity relation above that $\theta$ is non-decreasing as a function of $u$. Therefore, 
\[
    \int_{[0,\rho)} \bigl(\theta(u)-1\bigr)\,\mu(\ud u)=0
\]
implies
\[
F^\nu(x)-F^\mu(x)=\int_{[0,x]}\bigl(\theta(u)-1\bigr)\,\mu(\ud u)\leq 0, \quad x\in[0,\rho),
\]
i.e., $\nu\succeq\mu$, which yields (ii).

In the case $\eta = 0$, we have only the condition $-\lambda\in\partial f\bigl(\theta(u)\bigr)$,
which does not necessarily fully characterize $\theta$. Define $I_\lambda:=\left\{x\geq0:-\lambda\in\partial f(x)\right\}$, which is the set of minimizers of the convex function $x\mapsto f(x)+\lambda x$. Since $f$ is superlinear, $f(x)+\lambda x\to\infty$ as $x\to\infty$. Therefore, $I_\lambda$ is bounded above. Moreover, since it is the
minimizer set of a lower semicontinuous convex function, it is a closed
interval. Thus, we may write $I_\lambda=[a,b]$ with $0\leq a\leq b<\infty$.

Since the $\theta$ in the statement of the proposition satisfies
$\theta(u)\in I_\lambda$ for $\mu$-almost every $u$, we have
\[
a\le \theta\le b,\qquad \int_{[0,\rho)} \theta \, \ud\mu=1,\qquad\int_{[0,\rho)} S(u)\theta(u) \, \mu(\ud u)=0.
\]
Therefore, $a\le1\le b$ and by Lemma \ref{lem:comonotonic}, we can find a non-decreasing function $h$ such that
\[
a\leq h\leq b,\qquad \int_{[0,\rho)} h \, \ud \, \mu=1,\qquad\int_{[0,\rho)} S(u)h(u) \, \mu(\ud u)\ge0.
\]
Since $m^\mu<0$, we can construct a density $\widehat \theta$ by interpolating between $1$ and $h$, setting
\[
\widehat\theta:=(1-t)+t h \quad\text{with} \quad t=\frac{-m^\mu}{\int_{[0,\rho)} S(u)h(u)\,\mu(\ud u)-m^\mu}\in(0,1];
\]
it satisfies
\[
a\leq \widehat\theta\le b, \qquad \int_{[0,\rho)} \widehat\theta \, \ud\mu=1,\qquad\int_{[0,\rho)} S(u)\widehat\theta(u) \, \mu(\ud u)=0.
\]
We have $-\lambda\in\partial f(\widehat\theta(u))$ and, clearly, $\widehat\theta$ is non-decreasing. Thus, defining $\widehat\nu(\ud u):=\widehat\theta(u) \, \mu(\ud u)$, we have $\widehat\nu\succeq\mu$, showing that $\widehat\nu$ solves problem \eqref{eq:G}.

The case $m^\mu>0$ follows by an analogous argument.
\end{proof}

\begin{remark}
    Note that the condition in Proposition \ref{prop:mainthm2} is only sufficient, we do not claim that every optimizer does have this form. Though we show in Section \ref{sec:ex} that this condition is typically satisfied in common examples.
\end{remark}

\begin{remark}
We have an equivalent condition $r_0\in\bigl(\inf\supp\mu,\sup\supp\mu\bigr)$
to guarantee the non-emptiness of the feasible set $\mathcal{P}_\mu$. To see this, e.g., consider the case $m^\mu<0$. Then there exist unique constants $a\in(0,1)$ and $b>0$ such that the measure defined by
\[
\widetilde\nu(A):=\mu(A)-a\mu\bigl(A\cap[0,r_0)\bigr)+b\mu\bigl(A\cap(r_0,\rho)\bigr),\qquad A\in\mathcal B\bigl([0,\rho)\bigr),
\]
is a probability measure satisfying $m^{\widetilde\nu}=0$, i.e., $\widetilde\nu\in\mathcal P_\mu$. The case $m^\mu>0$ follows analogously.

This support condition makes sure the target distribution is neither too aggressive nor too conservative. 
If $\supp\mu$ lies strictly on one side of $r_0$, then every measure
absolutely continuous with respect to $\mu$ has scaled mean of the same
strict sign, and hence no finite divergence feasible measure with zero
scaled mean exists in the superlinear case.
\end{remark}

\begin{remark}
It is worth noting that the $S$-uniform integrability condition \eqref{eq:int} is a sufficient condition for problem \eqref{eq:C} to admit a solution, not a necessary one. Example \ref{ex:counter1} shows that without any condition of this type, existence of a solution might fail as the infimum may not be attained.
\end{remark}
\begin{corollary}\label{f}
	When $f$ is differentiable, then the first part of the condition $\eqref{fdiv1}$ can be expressed much nicer as $\theta$ solving
	\begin{equation}\label{eq:diff-form}
		f'\bigl(\theta(u)\bigr)+\lambda+\eta S(u)=0\qquad\mbox{for all }  u\in \supp \mu.
	\end{equation}
\end{corollary}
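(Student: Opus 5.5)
The corollary follows from Proposition~\ref{prop:mainthm2} once we note what the subdifferential of a differentiable convex function is. Let $x>0$ be a point where $f$ is differentiable. Then $\partial f(x)=\{f'(x)\}$. Indeed, $f'(x)$ is a subgradient by the standard supporting-line inequality for differentiable convex functions, $f(y)\ge f(x)+f'(x)(y-x)$ for $y\ge 0$. For $y<0$ the inequality is trivial because $f(y)=\infty$. Conversely, let $z\in\partial f(x)$. For small $h>0$ we have
\[
\frac{f(x+h)-f(x)}{h}\ge z\ge\frac{f(x)-f(x-h)}{h}.
\]
Letting $h\downarrow0$ gives $z=f'(x)$. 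So on the set $\{u:\theta(u)>0\}$, the inclusion $-\lambda-\eta S(u)\in\partial f(\theta(u))$ becomes exactly the equation $f'(\theta(u))+\lambda+\eta S(u)=0$.

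The only real obstacle is the boundary point $\theta(u)=0$. There $\partial f(0)=(-\infty,f'(0+)]$, where $f'(0+)$ is the right derivative (possibly $-\infty$, in which case $\partial f(0)=\emptyset$). The inclusion then reads $-\lambda-\eta S(u)\le f'(0+)$, which is an inequality, not an equation. I would handle this in one of two ways:
\begin{itemize}
\item interpret ``$\theta$ solving \eqref{eq:diff-form}'' with $f'(0)$ understood as $f'(0+)$ and the equation read through the generalized inverse: $\theta(u)=(f')^{-1}\bigl(-\lambda-\eta S(u)\bigr)$ where that value lies in the range of $f'$, and $\theta(u)=0$ where $-\lambda-\eta S(u)\le f'(0+)$;
\item observe that when $f'(0+)=-\infty$ (e.g.\ Kullback--Leibler, since $f(x)=x\log x$), the inclusion forces $\theta>0$ $\mu$-a.e., so the equation holds everywhere.
\end{itemize}
I would state the corollary in the first interpretation, noting that it reduces to the clean equation in the second case.

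Finally, the statement asks for \eqref{eq:diff-form} ``for all $u\in\supp\mu$'' rather than $\mu$-a.e. Proposition~\ref{prop:mainthm2} only needs the condition $\mu$-a.e. Requiring it pointwise on the support is a stronger hypothesis, so sufficiency is preserved. With \eqref{eq:diff-form} in place, the remaining conditions in \eqref{fdiv1}, namely the normalization and the zero scaled mean, are unchanged. The conclusion of Proposition~\ref{prop:mainthm2} then applies directly, giving that $\nu=\theta\mu$ solves \eqref{eq:G}.
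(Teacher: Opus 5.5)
Your argument is correct and is the reasoning the paper leaves implicit: for differentiable convex $f$ one has $\partial f(x)=\{f'(x)\}$ at every $x>0$, so the inclusion in \eqref{fdiv1} becomes \eqref{eq:diff-form}, and requiring the equation pointwise on $\supp\mu$ only strengthens the $\mu$-a.e.\ hypothesis of Proposition~\ref{prop:mainthm2}. Your caveat at $\theta(u)=0$ is substantive: there the inclusion only gives $-\lambda-\eta S(u)\le f'(0+)$, which is exactly what produces the positive parts $(\cdot)_+$ in the R\'enyi densities of Example~\ref{ex:renyi}, where $\theta$ vanishes on a set of positive $\mu$-measure and the literal equation fails, so your generalized-inverse reading is the correct formulation of the corollary.
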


\subsection{Optimal measures for non-superlinear \texorpdfstring{$f$}{f}}
In this subsection, we consider optimal measures under the $f$-divergence when $f$ is non-superlinear, that is, $f'(\infty)<\infty$. For notational simplicity, we denote $\alpha := f'(\infty)$. In this case, candidate minimizers need not be absolutely continuous with respect to $\mu$, and singular components with respect to $\mu$ may appear. Financially speaking, this means that the optimizer may not fully preserve the client's original preferences described by $\mu$, and may instead assign mass to portfolio outcomes outside the support of the target distribution.

In this case, the extended $f$-divergence is given by
\[D_f(\nu \, \Vert \, \mu)=\int_{[0,\rho)}f\Bigl(\frac{\ud\nu^{ac}}{\ud\mu}\Bigr) \ud \mu+f'(\infty)\nu^\perp\bigl([0,\rho)\bigr).
\]

We first present an important property of the optimization problem in the non-superlinear case: if a minimizer has a singular component, then we can choose a minimizer whose singular mass is concentrated at a single point.
\begin{lemma}\label{lem:0}
The following endpoint-reduction properties hold.
\begin{itemize}
\item[\textup{(i)}]
For every feasible measure $\nu$ of problem~\eqref{eq:B'}, there exists
a feasible measure $\widehat\nu$ of problem~\eqref{eq:B'} such that $D_f(\widehat\nu\Vert\mu)\leq D_f(\nu\Vert\mu)$ and whose singular component is concentrated at the ruin point $0$, i.e., $\widehat\nu^\perp((0,\rho))=0$.
In particular, if problem~\eqref{eq:B'} admits a minimizer, then it admits
a minimizer of this form.

\item[\textup{(ii)}]
For every feasible measure $\nu$ of problem~\eqref{eq:CM'}, there exists
a feasible measure $\widehat\nu$ of problem~\eqref{eq:CM'} such that $D_f(\widehat\nu\Vert\mu)\leq D_f(\nu\Vert\mu)$ and whose singular component is concentrated at the upper endpoint $M$,
i.e., $\widehat\nu^\perp([0,M))=0$.
In particular, if problem~\eqref{eq:CM'} admits a minimizer, then it admits
a minimizer of this form.
\end{itemize}
\end{lemma}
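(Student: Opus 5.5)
The plan is to move all singular mass of $\nu$ to a single endpoint. This leaves the singular cost unchanged, since it is $\alpha$ times the total singular mass. The scaled-mean constraint is then restored by mixing with $\mu$ in part (i), or is kept by monotonicity of $S$ in part (ii).

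\emph{Step 1: moving the mass.} Fix a feasible $\nu=\theta\mu+\nu^\perp$ and set $d:=\nu^\perp([0,\rho))$. If $d=0$ there is nothing to do, so assume $d>0$ (and hence $\alpha<\infty$ whenever the divergence is finite). Let $x^*$ be the endpoint, $0$ in case (i) and $M$ in case (ii), and define
\[
\nu':=\theta\mu+d\,\delta_{x^*}.
\]
If $\mu(\{x^*\})=0$, then $d\delta_{x^*}$ is singular with respect to $\mu$, and $D_f(\nu'\Vert\mu)=D_f(\nu\Vert\mu)$. If $p:=\mu(\{x^*\})>0$, the mass is absorbed into the density at $x^*$. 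The inequality $f(x+h)-f(x)\le\alpha h$ for a convex $f$ with finite recession constant, already used in the proof of Theorem~\ref{thm:eu}, gives
\[
p\bigl[f(\theta(x^*)+d/p)-f(\theta(x^*))\bigr]\le\alpha d.
\]
Hence $D_f(\nu'\Vert\mu)\le D_f(\nu\Vert\mu)$ in both subcases. Clearly $\nu'\in\mathcal P$ (respectively $\mathcal P^M$), because $S$ is finite at the endpoint and $|S|$ is integrable against $\theta\mu$.

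\emph{Step 2, case (i): restoring the constraint.} Since $S(0)\le S(u)$ for all $u$, we get $m^{\nu'}\le m^\nu=0$. If the inequality is strict, use $m^\mu>0$ and mix exactly as in Proposition~\ref{prop:equiv}(i). Set
\[
\widehat\nu:=(1-t)\nu'+t\mu,\qquad t=\frac{-m^{\nu'}}{m^\mu-m^{\nu'}}.
\]
Then $m^{\widehat\nu}=0$, and convexity of $D_f(\cdot\Vert\mu)$ together with $D_f(\mu\Vert\mu)=0$ gives $D_f(\widehat\nu\Vert\mu)\le D_f(\nu'\Vert\mu)$. The singular part of $\widehat\nu$ is $(1-t)$ times that of $\nu'$, so it is still concentrated at $0$.

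\emph{Step 2, case (ii): the constraint and dominance.} Moving mass up to $M\ge\sup\supp\nu$ increases $F$-dominance, so $\nu'\succeq\nu\succeq\mu$. However, it also raises the scaled mean, $m^{\nu'}\ge 0$, and this is the main obstacle, because we need $m^{\nu'}=0$. I would first try to compensate by moving absolutely continuous mass downward without leaving the FSD-feasible region. A second idea is to mix with $\mu$, but that fails because $m^\mu<0$ mixing preserves dominance ($\mu\succeq\mu$) and lowers the mean. So take $\widehat\nu:=(1-t)\nu'+t\mu$ with $t$ chosen so that $m^{\widehat\nu}=0$.

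Checking that mixture: since $F^{\widehat\nu}=(1-t)F^{\nu'}+tF^\mu\le F^\mu$, we have $\widehat\nu\succeq\mu$. Convexity gives $D_f(\widehat\nu\Vert\mu)\le(1-t)D_f(\nu'\Vert\mu)$. Finally, the singular part of $\widehat\nu$ is $(1-t)d\,\delta_M$, so it stays at $M$, and the support remains in $[0,M]$ since $\supp\mu\subseteq[0,M]$. This mixture therefore seems to handle case (ii) as well, and the obstacle reduces to the routine bookkeeping of the atom case $\mu(\{x^*\})>0$ and of the case $\alpha=\infty$, where $d=0$ automatically.

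The ``in particular'' statements then follow immediately, because applying the construction to a minimizer yields a feasible measure with no larger divergence, which is therefore again a minimizer.
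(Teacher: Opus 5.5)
Your argument is correct and is essentially the paper's proof: you move all singular mass to the endpoint ($0$ or $M$), handle the case of a $\mu$-atom at that endpoint via $f(x+h)-f(x)\le\alpha h$, and then restore zero scaled mean by mixing with $\mu$, which in (ii) also preserves $\widehat\nu\succeq\mu$ and keeps the singular part at $M$. The only blemish is the sentence in Step~2(ii) saying that mixing with $\mu$ ``fails''. The reasons you give there, and then use, are exactly why it works, so that sentence and the detour about moving absolutely continuous mass downward should simply be deleted.
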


\begin{proof}
We prove \textup{(i)}, and part \textup{(ii)} follows by a similar argument.
Let $\nu$ be any feasible measure for problem~\eqref{eq:B'}. If
$\nu^\perp((0,\rho))=0$, there is nothing to prove. Otherwise, we construct
a new measure $\widetilde\nu$ by moving this singular mass to the ruin point
$0$. More precisely, define
\[\widetilde\nu:=\nu^{ac}+\nu^\perp(\{0\})\delta_0+\nu^\perp((0,\rho))\delta_0.
\]
If $\mu(\{0\})=0$, clearly $D_f(\widetilde\nu\Vert\mu)=D_f(\nu\Vert\mu)$.
If $\mu(\{0\})>0$, then the moved mass becomes absolutely continuous at
the atom $0$, in which case the divergence still does not increase.
Specifically, a convex function $f$ with finite recession constant
$\alpha$ satisfies
\[f(x+h)-f(x)\leq\alpha h,\qquad x,h\geq0.
\]
Therefore,
\[\Biggl(f\biggl(\frac{\nu^\perp((0,\rho))+\nu(\{0\})}{\mu(\{0\})}\biggr)-f\biggl(\frac{\nu(\{0\})}{\mu(\{0\})}\biggr)
\Biggr)\mu(\{0\})\leq
f'(\infty)\nu^\perp((0,\rho)).
\]
Hence, in either case, $D_f(\widetilde\nu\Vert\mu)\leq D_f(\nu\Vert\mu)$.

On the other hand, since $S$ is strictly increasing and
$S(0)<S(x)$ for all $x>0$, $m^{\widetilde\nu}<m^\nu=0$.
Thus, interpolating between $\widetilde\nu$ and $\mu$ exactly as in the
proof of Proposition~\ref{prop:equiv}, we obtain a feasible measure
$\widehat\nu$ with zero scaled mean such that
\[
D_f(\widehat\nu\Vert\mu)
\leq
D_f(\widetilde\nu\Vert\mu)
\leq
D_f(\nu\Vert\mu),
\]
and whose singular part is concentrated at $0$.

For \textup{(ii)}, the same argument applies by moving all singular mass
in $[0,M)$ to $M$. This can only increase the scaled mean and preserves
first-order stochastic dominance. If the resulting scaled mean is
positive, interpolating with $\mu$ restores the zero scaled-mean condition;
the interpolation preserves first-order stochastic dominance and does not
increase the $f$-divergence.
\end{proof}

\begin{remark}
Although an optimal measure does not need to be absolutely continuous with respect to $\mu$ when $f'(\infty)<\infty$, Lemma \ref{lem:0} provides an explicit characterization of its singular part. 

It is also worth noting that, whenever $M>r_0$, a
minimizer cannot be completely singular with respect to $\mu$, in other words, $\nu^{ac}\bigl([0,\rho)\bigr)>0$ for any minimizer $\nu$.
To see this, first observe that if $\nu\perp\mu$ is a probability measure, then $D_f(\nu \, \Vert \, \mu)=f(0)+f'(\infty)$.
Since $m^\mu<0$ and $M>r_0$, define
\[
    \bar\nu:=(1-p)\mu+p\delta_M,
    \qquad
    p:=\frac{-m^\mu}{S(M)-m^\mu}\in(0,1).
\]
Then $m^{\bar\nu}=0$. By the convexity of $f$ and the fact $ f(x+h)-f(x)\leq f'(\infty)h$ for all $x,h\geq0$, it is easy to obtain 
\[D_f(\bar\nu \, \Vert \, \mu)\leq p\bigl(f(0)+f'(\infty)\bigr)<D_f(\nu \, \Vert \, \mu)
\]
since $p\in(0,1)$ and $f(0)+f'(\infty)>0$. Under Assumption \ref{ass}, $f\geq0$ and $f(1)=0$, the equality
$f(0)+f'(\infty)=0$ would imply $f(0)=f'(\infty)=0$, and convexity would then force $f\equiv0$, which is a trivial case.

If $M=r_0$, then the only probability measure supported on $[0,M]$ with zero
scaled mean is $\delta_{r_0}$, which gives a trivial boundary case.
\end{remark}

We now apply a calculus of variations approach similar to that used in the
previous section. We first consider the case $m^\mu>0$. By Lemma~\ref{lem:0}, if a minimizer
exists, we may choose one of the form
\[
    \nu(\ud x)=\nu_0\delta_0(\ud x)+\theta(x)\mu(\ud x),
\]
where $\theta\in L^1_+$ and $\nu_0\geq0$. For simplicity, we assume
that $\mu(\{0\})=0$; if $\mu(\{0\})>0$, the mass at $0$ can be absorbed
into the absolutely continuous part; see Remark~\ref{rem:muM}.

Then, problem~\eqref{eq:G} can be written as
\[\inf_{\theta\in L^1_+}\left\{\int_{[0,\rho)}\bigl(f(\theta(u))-\alpha\theta(u)\bigr)\mu(\ud u)+\alpha\right\},
\]
subject to
\begin{equation}\label{eq:OM3}
\int_{[0,\rho)}\theta(u)\,\mu(\ud u)\leq1,\qquad\int_{[0,\rho)}\theta(u)\bigl(S(u)-S(0)\bigr)\mu(\ud u)=-S(0).
\end{equation}

Similarly, if $m^\mu<0$, we assume for simplicity that
$\mu(\{M\})=0$. If $\mu(\{M\})>0$, the mass at $M$ can be absorbed into
the absolutely continuous part. For $\supp\mu,\supp\nu\subseteq[0,M]$, $M<\rho$, problem \eqref{eq:CM'} without the stochastic dominance restriction can be written as
\[
    \inf_{\theta\in L^1_+}
    \left\{
        \int_{[0,M]}
        \bigl(f(\theta(u))-\alpha\theta(u)\bigr)\,\mu(\ud u)
        +\alpha
    \right\},
\]
subject to
\[
    \int_{[0,M]}\theta(u)\,\mu(\ud u)\leq1,
    \qquad
    \int_{[0,M]}
    \theta(u)\bigl(S(u)-S(M)\bigr)\,\mu(\ud u)
    =
    -S(M).
\]

We then have a result similar to Proposition~\ref{prop:mainthm2}.

\begin{prop}\label{prop:mainthm3} 
The following sufficient optimality conditions hold.
\begin{itemize}
\item[\textup{(i)}]
Suppose that $m^\mu>0$ and $\mu(\{0\})=0$. Assume that $\theta\in L^1_+$, $S\theta\in L^1$, $\lambda\geq0$, and $\eta\in\mathbb R$ satisfy, for $\mu$-almost every $u$,
\begin{equation}\label{eq:fdiv3}
\alpha-\lambda-\eta\bigl(S(u)-S(0)\bigr) \in\partial f\bigl(\theta(u)\bigr),
\end{equation}
together with
\[
\int_{[0,\rho)}\theta(u)\,\mu(\ud u)\leq1, \qquad\int_{[0,\rho)}\theta(u)\bigl(S(u)-S(0)\bigr)\,\mu(\ud u)=-S(0),
\]
and $\lambda\left(\int_{[0,\rho)}\theta(u)\,\mu(\ud u)-1\right)=0$.
Define
\[\nu_0:=1-\int_{[0,\rho)}\theta(u)\,\mu(\ud u) \quad\text{and}\quad \ud\nu:=\nu_0\ud \delta_0+\theta\ud\mu.
\]
Then, problem \eqref{eq:B'} has at least one solution $\widehat\nu$ such that $\mu\succeq\widehat\nu$.

\item[\textup{(ii)}]
Suppose that $m^\mu<0$. Fix $M\in[r_0,\rho)$ such that $\supp\mu\subseteq[0,M]$ and $\mu(\{M\})=0$. Assume that $\theta\in L^1_+$, $S\theta\in L^1$, $\lambda\ge0$, and $\eta\in\mathbb R$ satisfy, for $\mu$-almost every $u$,
\begin{equation}\label{eq:fdiv4}
\alpha-\lambda-\eta\bigl(S(u)-S(M)\bigr)
\in\partial f\bigl(\theta(u)\bigr),
\end{equation}
together with
\[ \int_{[0,M]}\theta(u)\,\mu(\ud u)\leq1, \qquad\int_{[0,M]}\theta(u)\bigl(S(u)-S(M)\bigr)\,\mu(\ud u)=-S(M),
\]
and $\lambda\left(\int_{[0,M]}\theta(u)\,\mu(\ud u)-1\right)=0$. Define
\[\nu_M :=1-\int_{[0,M]}\theta(u)\,\mu(\ud u)\quad\text{and}\quad \ud\nu:=\nu_M\ud\delta_M+\theta\ud\mu.
\]
Then, problem \eqref{eq:CM'} has at least one solution $\widehat\nu$ such that $\widehat\nu\succeq\mu$.
\end{itemize}
\end{prop}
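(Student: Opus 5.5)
Following the template of Proposition~\ref{prop:mainthm2}, the plan has two parts: first show that the measure $\nu$ built from the multiplier conditions is optimal (a pointwise subgradient inequality plus complementary slackness), and then modify it, if necessary, into an optimizer with the required dominance relation to $\mu$. We treat (i); (ii) is symmetric, with $0$ replaced by $M$ and $\delta_0$ by $\delta_M$.

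\textbf{Step 1: optimality of $\nu$.} By Lemma~\ref{lem:0}(i) it suffices to compare $\nu$ with feasible $\nu'=\nu_0'\delta_0+\theta'\mu$. Because $\mu(\{0\})=0$, the objective equals
\[
\Phi(\theta')=\int\bigl(f(\theta')-\alpha\theta'\bigr)\,\ud\mu+\alpha ,
\]
subject to the constraints \eqref{eq:OM3}. Writing $g(u):=\alpha-\lambda-\eta\bigl(S(u)-S(0)\bigr)\in\partial f(\theta(u))$, the subgradient inequality integrates to
\[
\Phi(\theta')-\Phi(\theta)\ \geq\ \int (g-\alpha)(\theta'-\theta)\,\ud\mu
=-\lambda\int(\theta'-\theta)\,\ud\mu-\eta\int (S-S(0))(\theta'-\theta)\,\ud\mu .
\]
The $\eta$-term vanishes because both densities satisfy the equality constraint. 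The $\lambda$-term is nonnegative: by complementary slackness $\lambda\int\theta\,\ud\mu=\lambda$, while $\lambda\geq0$ and $\int\theta'\,\ud\mu\leq 1$. Integrability of $S\theta$ and $S\theta'$ justifies splitting the integrals. Hence $\nu$ solves \eqref{eq:B'}, and also \eqref{eq:B} by Proposition~\ref{prop:equiv}.

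\textbf{Step 2: the dominance $\mu\succeq\nu$.} By monotonicity of $\partial f$, $\theta$ is non-increasing when $\eta>0$ and non-decreasing when $\eta<0$.
\begin{itemize}
\item[$\bullet$] \emph{Case $\eta<0$.} Here $\theta$ is non-decreasing and $\int\theta\,\ud\mu\le1$. Applying Chebyshev's inequality to $S$ and $\theta$ gives $\int S\theta\,\ud\mu\geq m^\mu\int\theta\,\ud\mu$. Combining this with the constraint $\nu_0 S(0)+\int S\theta\,\ud\mu=0$ should yield a contradiction with $m^\mu>0$; checking the sign of $\nu_0S(0)$ is the delicate point. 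So we expect $\eta\ge0$.
\item[$\bullet$] \emph{Case $\eta>0$.} Here $\theta$ is non-increasing. Then
\[
F^\nu(x)-F^\mu(x)=\nu_0+\int_{[0,x]}(\theta-1)\,\ud\mu .
\]
The function $x\mapsto\int_{[0,x]}(\theta-1)\,\ud\mu$ increases while $\theta\ge1$ and decreases afterwards, and its limit at $\rho$ is $-\nu_0$. So the whole expression starts at $\nu_0\ge0$ (near $0$), rises, then falls to $0$, and is therefore $\geq 0$ everywhere. This gives $F^\nu\ge F^\mu$, i.e.\ $\mu\succeq\nu$.
\item[$\bullet$] \emph{Case $\eta=0$.} Then $\theta(u)\in I:=\{x:\alpha-\lambda\in\partial f(x)\}=[a,b]$, where $b\le\infty$ is now possible since $f$ is not superlinear. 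We rearrange $\theta$ into a monotone density with the same mass using Lemma~\ref{lem:comonotonic}, applied in its mirrored non-increasing form (reflect the ordering, i.e.\ apply it to $-S$). This produces a non-increasing $h\in[a,b]$ with $\int h\,\ud\mu=C:=\int\theta\,\ud\mu$ and a smaller scaled integral. Replacing $\theta$ by a convex combination of $h$ and $\theta$ (or $h$ and $\mu$) restores the equality constraint, keeps the subgradient condition, and keeps the mass at most $1$ with slackness unchanged. Optimality then follows from Step~1, and dominance from the $\eta>0$ argument.
\end{itemize}

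The main obstacle is the bookkeeping of the atom at $0$. In the $\eta=0$ case, the interpolation must preserve both $\int\theta\,\ud\mu\le1$ and the complementary slackness condition, since the mass at $0$ shifts. In the $\eta<0$ contradiction, the $\nu_0S(0)$ term can have either sign. If the contradiction fails there, we would instead argue that reallocating mass from the upper region to $0$ keeps the value optimal, which forces a non-increasing optimizer.
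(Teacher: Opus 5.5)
Your Step 1 is the paper's optimality argument. Your unimodality argument showing $\mu\succeq\nu$ when $\theta$ is non-increasing is also correct. The genuine gap is the exclusion of the wrong-sign multiplier: $\eta<0$ in (i), and symmetrically $\eta>0$ in (ii).

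Chebyshev's inequality only handles the case $\nu_0=0$. If $\nu_0>0$, combine $S(0)<0$, the constraint $\nu_0S(0)+\int S\theta\,\ud\mu=0$, and the Chebyshev bound $\int S\theta\,\ud\mu\ge m^\mu(1-\nu_0)$. All this yields is $\nu_0\ge m^\mu/(m^\mu-S(0))$, which is no contradiction. Your fallback about "reallocating mass to $0$" is not an argument. Without excluding this sign, you are left with a non-decreasing $\theta$ plus an atom at $0$, from which nothing follows about $\mu\succeq\nu$. So (i) is not proved, and (ii) inherits the same hole when $\nu_M>0$.

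The missing idea is to combine complementary slackness with the recession slope. If $\nu_0>0$, then $\int\theta\,\ud\mu<1$, so $\lambda=0$. The prescribed subgradient is then $\alpha-\eta\bigl(S(u)-S(0)\bigr)$, which is strictly greater than $\alpha$ for every $u>0$. Since $\mu(\{0\})=0$, this holds for $\mu$-a.e.\ $u$. But every $z\in\partial f(x)$ satisfies $z\le\alpha$: divide $f(y)\ge f(x)+z(y-x)$ by $y$ and let $y\to\infty$. This contradiction is exactly how the paper rules out $\eta>0$ in (ii) when the mass at $M$ is positive.

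A smaller issue arises in your $\eta=0$ case. Interpolating $h$ with $\theta$ destroys monotonicity, so you must interpolate with a constant density. Using the constant $1$ (i.e.\ mixing with $\mu$) does work, but only if you check that $1\in[a,b]$, so that the subgradient condition is preserved:
\begin{itemize}
\item if $\lambda>0$, then $\int\theta\,\ud\mu=1$ forces $b\ge1$;
\item if $\lambda=0$, then $\{x:\alpha\in\partial f(x)\}$ is a half-line $[a,\infty)$, because $x\mapsto f(x)-\alpha x$ is nonincreasing.
\end{itemize}
After that check, the mass $(1-t)C+t$ equals $1$ when $\lambda>0$ and is at most $1$ otherwise, so slackness is preserved. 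The equality constraint can be restored because $\int(S-S(0))\,\ud\mu=m^\mu-S(0)>-S(0)$. The paper sidesteps the membership check by interpolating with $\kappa=\min\{b,1\}$ instead.
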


\begin{proof}
We first prove the optimality statement in \textup{(i)}. We aim to minimize $J(\theta):= \int_{[0,\rho)}\bigl(f(\theta(u))-\alpha\theta(u)\bigr)\mu(\ud u)$. Let $\theta'$
be any other feasible density and set $A(u):=S(u)-S(0)$. By \eqref{eq:fdiv3} and the subgradient inequality,
\[
f\bigl(\theta'(u)\bigr)-\alpha\theta'(u)-\bigl(f(\theta(u))-\alpha\theta(u)\bigr)\geq-\lambda\bigl(\theta'(u)-\theta(u)\bigr) -\eta A(u)\bigl(\theta'(u)-\theta(u)\bigr).
\]
Integrating and using the equality constraint gives
\[
\begin{aligned}
 J(\theta')-J(\theta)\geq-\lambda\left(\int\theta'\ud\mu-\int\theta\ud\mu\right).
\end{aligned}
\]
If $\lambda=0$, the right-hand side is zero. If $\lambda>0$, the
complementarity condition implies $\int\theta\ud\mu=1$,
while feasibility gives $\int\theta'\ud\mu\leq1$. Hence in either case $J(\theta')\geq J(\theta)$.
Thus $\nu$ is a minimizer. A similar approach works for (ii).

To prove the first-order stochastic dominance, we consider case \textup{(ii)}; case \textup{(i)} follows analogously. From \eqref{eq:fdiv4}, there exists $\gamma(u)\in\partial f\bigl(\theta(u)\bigr)$ such that 
\[
\gamma(u) = \alpha-\lambda-\eta \bigl(S(u)-S(M)\bigr).
\]

We first claim that $\eta\leq0$. To prove this, suppose, toward a contradiction, that $\eta>0$. Since $S(u)-S(M)$ is strictly increasing, $\gamma(u)$ is strictly decreasing in $u$. Hence, by the monotonicity of the subdifferential of a convex function, $\theta$ is non-increasing, and for $v\geq u$ we have
\begin{equation}\label{eq:mono2}
-\eta\bigl(S(v)-S(u)\bigr)\bigl(\theta(v)-\theta(u)\bigr)\geq0.
\end{equation}

Define $c_\nu^\ll:=\int_{[0,M]}\theta(u)\,\mu(\ud u)\leq1$. If $c_\nu^\ll=1$, then $\nu$ has no singular mass at $M$. Since $S$ is increasing and $\theta$ is non-increasing, Chebyshev's inequality for oppositely ordered functions gives
\[0=m^\nu =\int_{[0,M]}S(u)\theta(u)\,\mu(\ud u)\leq\left(\int_{[0,M]}S(u)\,\mu(\ud u)\right)\left(\int_{[0,M]}\theta(u)\,\mu(\ud u)\right)=
 m^\mu<0,
\]
which is a contradiction.

If $c_\nu^\ll<1$, then the complementary slackness condition implies that $\lambda=0$. Hence
\[\gamma(u)=\alpha-\eta\bigl(S(u)-S(M)\bigr).
\]
For every $u<M$, since $S(u)<S(M)$ and $\eta>0$, we have $\gamma(u)>\alpha$.
On the other hand, since $\alpha=f'(\infty)$ is the recession slope of the convex function $f$, every finite subgradient of $f$ is bounded above by $\alpha$. Thus, $\gamma(u)\in\partial f\bigl(\theta(u)\bigr)$ implies $\gamma(u)\leq\alpha$, which is again a contradiction.
Therefore, $\eta\leq0$.

If $\eta <0$, the monotonicity condition \eqref{eq:mono2} implies that $\theta$ is non-decreasing.

Assume now $\eta = 0$ and let
\[
g(x):=f(x)-\alpha x.
\]
Since $\eta=0$, $-\lambda\in\partial g(\theta(u))$. Denote $\theta\in[a,b]$, with $0\leq a\leq b\leq\infty$. Suppose that
$\theta_0$ is a feasible solution with $\eta=0$, namely,
\begin{equation}
\label{Condition}
a\le\theta_0\le b,\qquad \int_{[0,\rho)}\theta_0 \, \ud\mu\le1,\qquad \text{and} \qquad
    \int_{[0,\rho)} \bigl(S(M)-S(u)\bigr)\theta_0(u) \, \mu(\ud u)=S(M).
\end{equation}

By Lemma \ref{lem:comonotonic}, we can find a nondecreasing function $h$ that satisfies the above constraints and 
\[
\int_{[0,\rho)} \bigl(S(M)-S(u)\bigr)h(u) \, \mu(\ud u) \leq \int_{[0,\rho)} \bigl(S(M)-S(u)\bigr) \theta_0(u) \, \mu(\ud u)=S(M).
\]
Note that, due to the fact $ \beta:=\int_{[0,\rho)} \theta_0 \, \ud\mu\le1$ with inequality constraint instead of equality, we might not have $b\ge 1$. Therefore, take 
\[
\kappa:=\min\{b,1\}.
\]
It is easy to verify that 
\[
K:=\kappa\int_{[0,\rho)} \bigl( S(M)-S(u) \bigr) \, \mu(\ud u) \ge S(M).
\]
We now construct
\[
\widehat\theta:=(1-t)h+t\kappa,\qquad t:=\frac{S(M)-\int_{[0,M]}(S(M)-S(u))h(u)\mu(\ud u)}{K-\int_{[0,M]}(S(M)-S(u))h(u)\mu(\ud u)}\in[0,1].
\]
By convention, simply take $t=0$ if the denominator is $0$. Then, it is straightforward to verify that $\widehat\theta$ satisfies all conditions in \eqref{Condition}.
As $a\le\widehat\theta\le b$ and $\int_{[0,\rho)} \widehat\theta \, \ud\mu=(1-t)\beta+t\kappa\le1$, $-\lambda\in\partial g(\widehat\theta(u))$, and $\widehat\theta$ is non-decreasing. Therefore
\[
\widehat\nu:=\widehat\theta\mu+\biggl(1-\int_{[0,\rho)} \widehat\theta\ud\mu\biggr)\delta_M,
\]
which implies $\widehat\nu\succeq\mu$.
Moreover, $\widehat\nu$ satisfies the optimality conditions above and
therefore solves problem \eqref{eq:CM'}.
\end{proof}

\begin{remark}
In contrast to problem~\eqref{eq:CM}, in the non-superlinear case problem~\eqref{eq:C} may also fail to attain its infimum. When $m^\mu<0$, the constraint $\int_{[0,\rho)} S\,\ud\nu=0 $ requires us to increase the scaled mean. In the case of $S(\rho) = \infty$, a minimizing sequence may be achieved in the following way: Choose an increasing sequence $(n_k)$, $\lim_{k \to \infty} n_k = \rho$ that satisfies $\mu\bigl(\{n_k\}\bigr) =0$ (as $\mu$ can only have countably many atoms this can clearly be done). Define
\[
\epsilon_k := \frac{m^\mu}{m^\mu-S(n_k)}, \qquad \ud \nu_k := (1-\epsilon_k) \ud \mu + \epsilon_k \ud\delta_{n_k}.
\]
Then $\nu_k$ is a sequence of probability measures with scaled mean zero that converge weakly to $\mu$. 
A similar argument works in the case $S(\rho) < \infty$, except that $\lim_{k \to \infty} \epsilon_k =: \epsilon > 0$. Thus the sequence $\nu_k$ converges again, albeit only vaguely. The limit is the sub-probability measure $(1-\epsilon)\mu$ which not only fails the scaled mean condition, but is even not a probability measure.
See also Examples~\ref{ex:counter1} and~\ref{ex:TV}.
\end{remark}

\begin{remark}\label{rem:muM}
In the cases $\mu\bigl(\{0\}\bigr)>0$ or $\mu\bigl(\{M\}\bigr)>0$, the corresponding endpoint
mass can be absorbed into the absolutely continuous part. Hence, Proposition~\ref{prop:mainthm3} reduces to the same type of problem as in Proposition \ref{prop:mainthm2}.
Indeed, Lemma \ref{lem:0} and Proposition \ref{prop:equiv} show that minimizers for
problem \eqref{eq:B} and the compactly supported
problem \eqref{eq:CM}, respectively, can be chosen to be
absolutely continuous with respect to $\mu$. Therefore, in both cases,
the extended $f$-divergence reduces to the ordinary $f$-divergence:
\[D_f(\nu \, \Vert \, \mu)= \int_{[0,\rho)} f\left(\frac{\ud\nu}{\ud\mu}\right)\,\ud\mu.
\]
\end{remark}

\begin{corollary}\label{coro2}
When $f$ is differentiable, then the first part of the condition~\eqref{eq:fdiv3} can be
expressed as $\theta$ solving 
\[
f'\bigl(\theta(u)\bigr) = \alpha-\lambda-\eta\bigl(S(u)-S(0)\bigr) \qquad\text{for all}\quad u\in\supp\mu,
\]
and the first part of the condition~\eqref{eq:fdiv4} can be expressed as $\theta$ solving 
\[
f'\bigl(\theta(u)\bigr) = \alpha-\lambda-\eta\bigl(S(u)-S(M)\bigr) \qquad\text{for all}\quad u\in\supp\mu.
\]
\end{corollary}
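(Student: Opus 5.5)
The corollary is a direct specialization of Proposition~\ref{prop:mainthm3}, and the plan is to identify the subdifferential with the derivative. If $f$ is differentiable at a point $x>0$, then $\partial f(x)=\{f'(x)\}$. Indeed, any subgradient $z$ satisfies $f(x+h)-f(x)\geq zh$ for all small $h$ of both signs. Dividing by $h>0$ and by $h<0$ and letting $h\to0$ gives $f'(x)\leq z\leq f'(x)$. Conversely, $f'(x)$ is a subgradient by convexity, since the tangent line supports a convex function. Hence, wherever $\theta(u)>0$, the inclusion \eqref{eq:fdiv3} reads $f'(\theta(u))=\alpha-\lambda-\eta(S(u)-S(0))$, and likewise \eqref{eq:fdiv4} reads $f'(\theta(u))=\alpha-\lambda-\eta(S(u)-S(M))$. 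This is exactly the argument behind Corollary~\ref{f}, with the multiplier $-\lambda$ replaced by $\alpha-\lambda$ and $S$ shifted by the constant $S(0)$ or $S(M)$.

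The only delicate point, which I expect to be the main obstacle, is the boundary value $\theta(u)=0$. There $f$ is extended by $+\infty$ on $(-\infty,0)$, so $\partial f(0)=(-\infty,f'(0+)]$ is a half-line rather than a singleton. I would handle this the same way Corollary~\ref{f} implicitly does. Differentiability is understood on the relevant domain, with $f'(0):=f'(0+)$ as the one-sided derivative, and the equation is read as the natural statement on $\{\theta>0\}$. If $f'(0+)=-\infty$, which holds for instance for the Kullback--Leibler-type generators in Section~\ref{sec:ex}, then $\partial f(0)=\emptyset$, so the inclusion forces $\theta>0$ $\mu$-a.e. In that case the equation holds everywhere on $\supp\mu$, which is the typical situation in the examples.

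Finally, I would pass from ``$\mu$-almost every $u$'' to ``$u\in\supp\mu$'' in the same informal sense as in Corollary~\ref{f}. One chooses the version of $\theta$ defined pointwise by solving the equation, which is possible because $f'$ is nondecreasing and continuous for a differentiable convex $f$, so it is invertible on its range. Sufficiency then follows directly from Proposition~\ref{prop:mainthm3}: any $\theta$ solving the displayed equation, together with the integral constraints and complementary slackness, satisfies \eqref{eq:fdiv3} or \eqref{eq:fdiv4}, and therefore yields an optimizer.
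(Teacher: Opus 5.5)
Your proposal is correct and is the argument the paper leaves implicit (it gives no separate proof): for convex $f$ differentiable at $x>0$ one has $\partial f(x)=\{f'(x)\}$. Your further observation is also right and is not spelled out in the paper. When $\theta(u)=0$ and $f'(0+)$ is finite, the equation, with $f'(0)$ read as $f'(0+)$, is only sufficient for membership in $\partial f(0)=(-\infty,f'(0+)]$, not equivalent to it.

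Two slips, neither of which affects the argument. First, choosing a version of $\theta$ needs a selection rather than an inverse, since $f'$ is injective only when $f$ is strictly convex. Second, the Kullback--Leibler generator is superlinear, so in this non-superlinear setting the relevant examples with $f'(0+)=-\infty$ are the R\'enyi generators of order in $(0,1)$ and the Hellinger-type generators.
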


\section{Examples}\label{sec:ex}

In this section, we present explicit examples for the problems \eqref{eq:B}, \eqref{eq:C} and \eqref{eq:CM}, both theoretically and numerically. We consider both superlinear ($f'(\infty)=\infty$) and non-superlinear ($f'(\infty)\in[0,\infty)$) choices of $f$. The examples illustrate the calculus of variations results established in Propositions~\ref{prop:mainthm2} and \ref{prop:mainthm3}, and provide counterexamples showing that, without appropriate assumptions, the infimum may fail to be attained, or that the first-order stochastic dominance condition in problems \eqref{eq:C} and \eqref{eq:CM} may not be satisfied.

For computational simplicity, throughout this section, the risk process is taken to be $R_t=B_t+2$, starting at $r_0=2$ so that the scale function is $S(x)=x-2$ from \eqref{eq:scale}. We present here only the key results; lengthy calculations and verifications are omitted.

\paragraph{Kullback-Leibler divergence}\label{sec:KL}
The classical Kullback--Leibler divergence can be regarded as a special $f$-divergence with
\[
    f(x)=x\ln(x),    \qquad x\ge 0.
\]

\begin{example}\label{ex:counter1}
We give an explicit example showing that, when the $S$-uniform integrability condition \eqref{eq:int} in Theorem \ref{thm:eu} \textup{(ii)} fails, the infimum in problem \eqref{eq:C} is not attained. Consider
\[
\mu=\frac34\delta_0+\frac14\pi,
\] 
where $\pi$ is the Pareto distribution on $[1,\infty)$ with density $2x^{-3}\ind_{[1,\infty)}(x)$. Since $m^{\mu}<0$, this is a case of  problem \eqref{eq:C}.

For $n>1$ we construct the family of probability measures
 \[
 \nu_n := (1-\varepsilon_n)\mu+\varepsilon_n\mu_n \quad \text{with}\quad \varepsilon_n=\frac{3}{4n-1}\quad \text{and}\quad\mu_n(\ud x) := 4n^2\ind_{[n,\infty)}(x)\mu(\ud x) . 
 \]
Then, $m^{\nu_n}=0$, $\nu_n\ll\mu$ and $\mu_n\succeq\mu$ hence $\nu_n\succeq\mu$.
Thus $\nu_n$ is feasible for problem \eqref{eq:C}. 

However, by convexity, we obtain
\[
D_{\mathrm{KL}}(\nu_n \, \Vert \, \mu) \le \frac{3\log(4n^2)}{4n-1} \xrightarrow[]{n\to\infty}0.
\]
Thus the infimum in problem \eqref{eq:C} is zero. Since
$D_{\mathrm{KL}}(\nu \, \Vert \, \mu)=0$ if and only if $\nu=\mu$, the only
measure that could attain this value is $\mu$. But $\mu$ is not feasible for problem \eqref{eq:C} due to the fact that $m^\mu<0$. Therefore, problem \eqref{eq:C} does not admit a minimizer.
\end{example}

\paragraph{R\'enyi divergence; Hellinger and Bhattacharyya distances}\label{sec:Renyi}
For
$\alpha\in(0,1)\cup(1,\infty)$, the R\'enyi divergence is formally defined as
\[
D_R(\nu \, \Vert \, \mu):=\frac{1}{\alpha-1}\ln\int_{[0,\rho)}\left(\frac{\ud\nu}{\ud\mu}\right)^\alpha \, \ud \mu,
\]
whenever $\nu\ll\mu$. In the case $\alpha>1$ minimizing $D_R(\nu \, \Vert \, \mu)$ is equivalent to minimizing
\[
    \int_{[0,\rho)} \biggl(\frac{\ud\nu}{\ud\mu}\biggr)^\alpha \ud \mu.
\]
Thus, minimizing $D_R(\nu \, \Vert \, \mu)$ is equivalent to minimizing a
superlinear $f$-divergence with generator $f(x)=x^\alpha$.

\begin{example}\label{ex:renyi}
We take $\alpha=2$.
Consider two target measures $\mu_1\sim\operatorname{Exp}\left(\frac13\right)$, and $\mu_2\sim\operatorname{Exp}(3)$. We can verify that $m^{\mu_1}>0$ and $m^{\mu_2}<0$, corresponding to problems \eqref{eq:B} and \eqref{eq:C}, respectively. The density of the optimal attainable distributions $\nu_1$ and $\nu_2$ is
\begin{align*}
f_{\nu_1}(x) &= \tfrac{1}{6}(2.7422 - 0.2544x)_+\, e^{-x/3}, \quad x>0,\\
f_{\nu_2}(x) & = \tfrac{3}{2} \left(327.5889(x - 2) + 218.3926\right)_+ e^{-3x}, \quad x>0,
\end{align*}
see Figure~\ref{fig:f1}.

\begin{figure}[htbp]
    \centering
    \includegraphics[width=0.45\textwidth, valign=b]{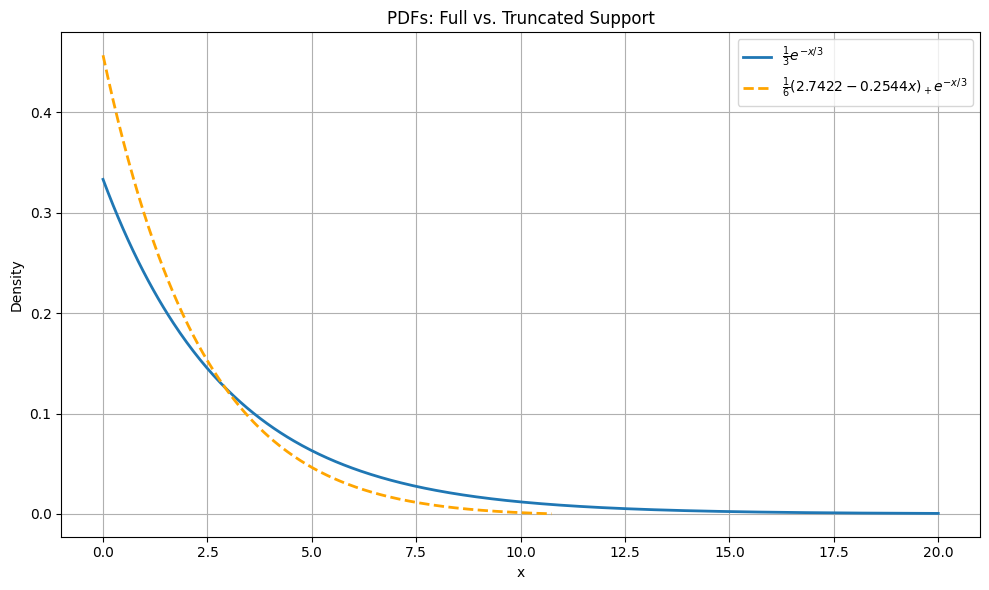} \quad \includegraphics[width=0.45\textwidth, valign=b]{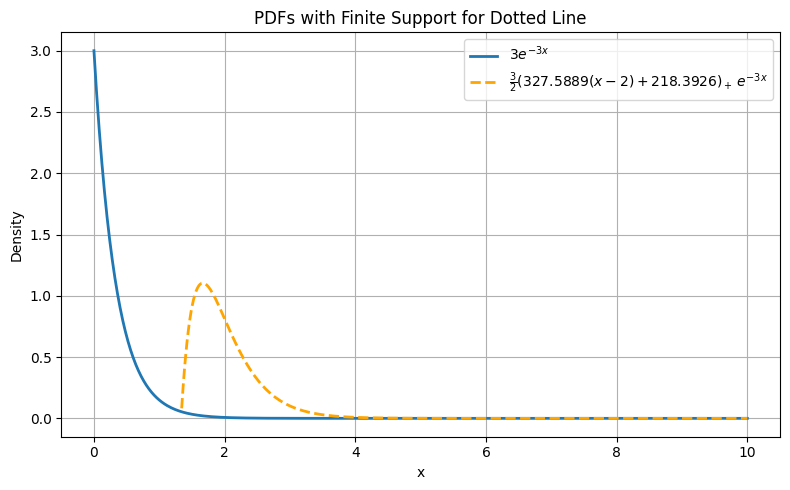}
    \caption{Example~\ref{ex:renyi}, R\'{e}nyi divergence: finding the attainable payoff closest to the infeasible target with  $m^{\mu_1} > 0$ (left) and increasing the payoff of a suboptimal distribution with $m^{\mu_2} < 0$ to the closest optimal one (right).}\label{fig:f1}
\end{figure}
\end{example}

In the case $0<\alpha<1$, since $1/(\alpha-1)<0$, minimizing $D_R(\nu \, \Vert \, \mu)$ is equivalent to minimizing 
\[
1-\int_{[0,\rho)}\theta(u)^\alpha\,\mu(\ud u).
\]
The R\'enyi divergence is closely related to the (squared) Hellinger distance,
\[
H^2(\mu,\nu) :=\frac12\int_{[0,\rho)} \Bigl(\sqrt{p(x)}-\sqrt{q(x)}\Bigr)^2\,\xi(\ud x),
\]
and the Bhattacharyya distance,
\[
\operatorname{BC}(\mu,\nu):= \int_{[0,\rho)} \sqrt{p(u)q(u)}\,\xi(\ud u),
\]
where
\[
\mu(\ud x)=p(x) \, \xi(\ud x),\quad\nu(\ud x)=q(x) \, \xi(\ud x).
\]
Thus, minimizing the squared Hellinger distance or the Bhattacharyya
distance is equivalent to minimizing the R\'enyi divergence with
$\alpha=\frac12$.

\paragraph{Total variation distance and distances of affine type}\label{sec:TV}
The \textit{total variation distance} is an $f$-divergence with $f(x)=\frac12|x-1|$, implying $f'(\infty)=\frac12$,
\[
D_f(\nu \, \Vert \, \mu)=\frac12\int_{[0,\rho)} |\theta(u)-1|\mu(\ud u)+\frac12\nu^\perp\bigl([0,\rho)\bigr)=
|\nu-\mu|_{\mathrm{TV}}.
\]

\begin{example}\label{ex:TV}
Take
\[
\mu_1=\frac13\delta_1+\frac13\delta_3+\frac13\delta_5,
\qquad
\mu_2=\frac13\delta_0+\frac13\delta_1+\frac13\delta_2.
\]
Then $m^{\mu_1}=1>0$, and $m^{\mu_2}=-1<0$ corresponding to problems \eqref{eq:B} and \eqref{eq:CM}, respectively. By Proposition~\ref{prop:mainthm3}, the minimizers for problem \eqref{eq:B} and problem \eqref{eq:CM} with $M = 5$ are
\[
\nu_1^*=\frac15\delta_0+\frac13\delta_1+\frac13\delta_3+\frac{2}{15}\delta_5,
\qquad
\nu_2^*=\frac{2}{15}\delta_0+\frac13\delta_1+\frac13\delta_2+\frac15\delta_5.
\]
However, problem \eqref{eq:C} does not admit a solution, since its infimum is not attained. Indeed, for $N>3$, define
\[
\nu_N=\left(\frac13-\frac1N\right)\delta_0+\frac13\delta_1+\frac13\delta_2+\frac1N\delta_N.
\]
Then $m^{\nu_N}=0$, $\nu_N\succeq\mu_2$, and
\[
|\nu_N-\mu_2|_{\mathrm{TV}}=\frac1N\xrightarrow[]{N\to\infty}0.
\]
Since $\mu_2$ is the only measure satisfying
$D_f(\cdot \, \Vert \, \mu_2)=0$ but is not feasible, the infimum is not attained.
\end{example}

The particular feature of the total variation distance is that $f$ is not strictly convex away from a single point (here $1$), a property shared with divergences with affine $f$. In general, such divergences lead to non-uniqueness, and some of the minimizers might not satisfy
the first-order stochastic dominance constraint, as the next example shows.

\begin{example}\label{ex:FSD}
Set $M=3$ and consider the two-point distribution 
\[
\mu := \frac14\delta_0+\frac34\delta_1\quad\text{with}\quad m^\mu = -\frac54<0,
\] 
and aim to solve problem \eqref{eq:CM} with piecewise-affine $f$-divergence
\[
f(x) := \Bigl(x-\frac1{20}\Bigr)^+-\frac{19}{20}, \qquad x\ge0.
\] 
Here $f$ is convex but not strictly convex. Two minimizers of the compact zero-scaled-mean problem obtained from
\eqref{eq:CM'} by dropping the stochastic dominance constraint are
\[
\nu_1 = \frac3{10}\delta_0 + \frac1{20}\delta_1 + \frac{13}{20}\delta_3, \quad\text{and}\quad \nu_2 = \frac14\delta_0 + \frac18\delta_1 + \frac58\delta_3.
\] 
Both solutions solve the compact optimization problem, however,
\[
F^{\nu_1}(0) = \frac3{10} > \frac14 = F^\mu(0),
\] 
whence $\nu_1\not\succeq\mu$ while $\nu_2\succeq\mu$.
This example shows that, when $f$ is not strictly convex, not every
minimizer of the relaxed compact problem need satisfy the first-order
stochastic dominance constraint. In particular, although $\nu_2$ solves
problem~\eqref{eq:CM}, $\nu_1$ does not.
\end{example}

\section{Conclusion}\label{sec:conc}

In the current article we have considered the problem of the optimal timing of an asset sale under the distribution builder methodology for an asset that can default, using general (non-singular) diffusions as models for asset prices. We derived explicit characterizations of distributions that can be attained or super-attained in this model, building upon the theory of the Skorokhod embedding problem. To the best of our knowledge, this is the first extension of the Skorokhod problem to stopped diffusion processes.

We then complemented the distribution builder framework by presenting a method of how to determine the closest feasible distribution if the originally chosen distribution is not (super-)attainable. Similarly, if the original distribution is (super-)attainable but not optimal, we show how to determine the optimal distribution closest to the originally specified one. This analysis was conducted in a framework of extended $f$-divergences for the notion of statistical distance for $f$-divergences, a rich class that encompasses many often-used discrepancies such as the Kullback--Leibler and R\'{e}nyi divergences as well as the total variation distance. We complemented the theoretical analysis by providing several examples to show that optimal distribution can be calculated easily in practice, as well as counterexamples to show that the conditions in our framework are indeed required.

\appendix
\section{Proof of Lemma \ref{lem:comonotonic}}
\begin{proof}
If $a=b=1$, then the result is trivial as $\theta_0$ is already non-decreasing; hence we assume $a<b$. We first prove the case $b<\infty$. Define
\[
g(u):=\frac{\theta_0(u)-a}{b-a},
\]
which satisfies
\[
0\le g\le 1,\qquad \int_{[0,\rho)} g \, \ud\mu=\frac{C-a}{b-a}=:q\in[0,1].
\]

By a rearrangement argument with atom splitting (cf. \cite[Proposition 4.4]{BS26} the integral
\[
    \int_{[0,\rho)} S(u)g(u)\,\mu(\ud u)
\]
is maximized, under the constraints $0\le g\le 1$ and $\int_{[0,\rho)} g\,\ud\mu=q$, when $S$ and $g$ are comonotonic. Specifically, we choose $c\in[0,\rho)$ such that
\[
\mu\bigl((c,\rho)\bigr)\le q\le \mu\bigl([c,\rho)\bigr),
\]
and choose $\gamma\in[0,1]$ such that
\[
\mu\bigl((c,\rho)\bigr)+\gamma\mu\bigl(\{c\}\bigr)=q.
\]
Define $g^*(u):= \ind_{\{u>c\}}+\gamma\ind_{\{u=c\}}$. Then $g^*$ is non-decreasing and satisfies
\[
0\le g^*\le1, \qquad\int_{[0,\rho)} g^*\,\ud\mu=q, \quad\text{and}\quad \int_{[0,\rho)} S(u)g^*(u)\,\mu(\ud u)\ge\int_{[0,\rho)} S(u)g(u)\,\mu(\ud u).
\]
Setting
\[
h(u):=a+(b-a)g^*(u)
\]
defines a non-decreasing function $h$ satisfying
\[
a\le h\le b, \qquad\int_{[0,\rho)} h\,\ud\mu=C,\quad\text{and}\quad\int_{[0,\rho)} S(u)h(u)\,\mu(\ud u) \ge \int_{[0,\rho)} S(u)\theta_0(u)\,\mu(\ud u)
\]
as desired.

Now suppose that $b=\infty$, if $C=a$, then $\theta_0=a$ trivially, and we simply set $h\equiv a$. If $C>a$, we define
\[
h(u):=a+\frac{C-a}{\mu\bigl([c,\rho)\bigr)}\ind_{\{u\ge c\}},
\]
where $c\in[0,\rho)$ is chosen so that 
\[ 
\frac1{\mu\bigl([c,\rho)\bigr)}\int_{[c,\rho)} S(u)\,\mu(\ud u) \ge \frac{1}{C-a}\int_{[0,\rho)} S(u)\bigl(\theta_0(u)-a\bigr)\,\mu(\ud u).
\]
Indeed,  such a $c$ exists in $[0,\rho)$. Denote
\[
L:=\frac{1}{C-a}\int_{[0,\rho)}S(u)\bigl(\theta_0(u)-a\bigr)\,\mu(\ud u)=\int_{[0,\rho)}S(u)\,\widetilde\mu(\ud u),
\]
for probability measure 
\[
\widetilde\mu(\ud u):= \frac{\theta_0(u)-a}{C-a}\,\mu(\ud u).
\]
on $[0,\rho)$. Since $S$ is not constant, $\widetilde\mu\bigl(\{S\ge L\}\bigr)>0$, implying $\mu\bigl(\{S\ge L\}\bigr)>0$ as $\widetilde\mu\ll\mu$. So, as $S$ is increasing and continuous, we can define 
\[
    c:=\inf\{x\in[0,\rho):S(x)\ge L\}<\rho
\]
and have $[c,\rho)=\{u:S(u)\ge L\}$.

Clearly, $h$ is non-decreasing and satisfies
\[
h\ge a,\qquad \int_{[0,\rho)} h\,\ud\mu=C,
\]
as well as
\begin{align*}
\int_{[0,\rho)} S(u)h(u)\,\mu(\ud u)&=a\int_{[0,\rho)} S(u)\mu(\ud u)+\frac{C-a}{\mu\bigl([c,\rho)\bigr)}\int_{[c,\rho)} S(u)\,\mu(\ud u) \\ &\ge \int_{[0,\rho)} S(u)\theta_0(u)\,\mu(\ud u).
\end{align*}
Finally, as $h$ is bounded and $S$ integrable, the integral on the left is finite.
\end{proof}
%\section*{Acknowledgments}
%AI (GPT 5.6 Sol) was used for a final error and typo check; The authors are responsible for all remaining mistakes.

\bibliographystyle{siamplain}
\bibliography{biblist}

\end{document}